\def\d#1{{#1\kern-0.4em\char"16\kern-0.1em}}
\def\D#1{{\raise0.2ex\hbox{-}\kern-0.4em #1}}
\theoremstyle{plain}
\newtheorem{definition}{Definition}[section]
\newtheorem{proposition}[definition]{Proposition}
\newtheorem{theorem}[definition]{Theorem}
\newtheorem{lemma}[definition]{Lemma}
\newtheorem{corollary}[definition]{Corollary}
\newtheorem{example}[definition]{Example}
\newtheorem{remark}[definition]{Remark}
\definecolor{Magenta}{rgb}{1,0,1}
\newcommand{\id}{{\sf e }}
\newcommand{\QA}{{\mathcal Q}_{\A_0}^{\,\C}(\A)}
\newcommand{\ip}[2]{\langle{#1}|{#2}\rangle}
\newcommand{\mult}{\,{\scriptstyle \square}\,}
\newcommand{\ad}{^{\mbox{\scriptsize $\dag$}}}
\newcommand{\X}{{\mathcal X}}
\newcommand{\DD}{\mathfrak D}
\newcommand{\LXH}{{\mathcal L}\ad(\DD,\X)}
\newcommand{\Lc}{{\mathcal L}}
\newcommand{\LD}{{\mathcal L}\ad(\D)}
\newcommand{\up}{\raisebox{0.7mm}{$\upharpoonright$}}
\def\N{{\mathcal{N}}}
\def\R{{\mathcal{R}}}
\def\B{{\mathcal{B}}}
\def\C{{\mathcal{C}}}
\def\D{{\cal{D}}}
\def\A{{\mathcal{A}}}
\def\S{{\mathcal{S}}}
\def\H{{\mathcal{H}}}
\def\M{\mathfrak{M}}
\def\CC{\mathbb{C}}
\def\RR{\mathbb{R}}
\def\ZZ{\mathbb{Z}}
\def\NN{\mathbb{N}}
\author{Stefan Ivkovi\'c, Bogdan D. Djordjevi\'c and Giorgia Bellomonte}
\date{}
\begin{document}

\title{On representations and topological aspects of positive maps on non-unital quasi *- algebras}

\maketitle

\begin{abstract}
In this paper we provide a representation of a certain class of C*-valued positive sesquilinear and linear maps on non-unital quasi *\hyp algebras, thus extending the results from \cite{GBSICT} to the case of non-unital quasi *-algebras. Also, we illustrate our results on the concrete examples of non-unital Banach quasi *-algebras, such as the standard Hilbert module over a commutative C*-algebra, Schatten p-ideals and noncommutative $L^2$-spaces induced by a semifinite, nonfinite trace. As a consequence of our results, we obtain a representation of all bounded positive linear  C*-valued  maps on non-unital C*-algebras. We also deduce some norm inequalities for these maps. Finally, we consider a noncommutative $L^2$-space equipped with the topology generated by a positive sesquilinear form and we construct a topologically transitive operator on this space.
\end{abstract}

{\bf MSC 2020:} 46K10, 47A07, 16D10, 37Bxx, 47G10. 

{\bf Keyphrases:} *-representations, positive C*-valued maps, non-unital quasi *-algebras,   noncommutative $L^2$-spaces,  topologically transitive operators.


\section{Introduction}Positive maps on operator algebras play an important role in various areas of mathematics and physics. Many deep mathematical results have been obtained in this topic, see for example \cite{CDPR} and \cite{stormer}. These maps turn out to be a powerful tool for characterizing entanglement in quantum information theory (for more details and the literature, see  \cite{CDPR}). Moreover, as observed in the introduction of \cite{stormer}, another area where positive maps appear is the one of operator spaces. Although, in that context, the maps are usually completely positive, there is a close connection with positive maps on C*-algebras. These facts provide a motivation for obtaining representations (thus in a certain sense an explicit description) of such maps. The classical result in representation theory of positive maps is the famous Stinespring theorem which gives a representation of completely positive maps on unital C*-algebras and, quite recently, in \cite[Corollary 3.10]{GBSICT}, a representation of general positive C*-valued maps on arbitrary unital *-algebras has also appeared. \\

In this paper (in Corollary \ref{cor: repres omega}) we obtain a representation of general bounded, positive linear C*- valued maps on non-unital C*-algebras.As a consequence, in Corollary \ref{cor: 3.10} we deduce also certain norm inequalities for these maps. Moreover, in Corollary \ref{moreGNS alg} we provide representations of positive C*-valued maps on general non-unital normed *-algebras with a right approximate identity.\\

 It is now long time that the C*-algebraic approach to quantum theories has been considered as a too rigid framework where casting all objects of physical interest. Therefore, several possible generalizations have been proposed: quasi *-algebras, partial *-algebras and so on. They reveal in fact to be more suited to cover situations where unbounded operator algebras are involved. Further, quasi *-algebras are also interesting from the mathematical point of view because several beautiful and important mathematical structures (such as commutative and noncommutative $L^p$ spaces) are examples of quasi *-algebras.  For all these reasons, it might be also relevant to study positive C*-valued maps on quasi *-algebras.  In \cite[Corollary 3.13]{GBSICT}, a representation of a special class of bounded positive linear C*-valued maps on unital normed quasi *-algebras has been provided. This class of positive maps turns out to be a proper generalization of the class of  bounded positive linear  C*-valued maps on C*-algebras, as we argue in Remark \ref{rem: 3.12} below.

However, there are many important examples of non-unital normed quasi *-algebras, such as the standard Hilbert module over a commutative C*-algebra, the Schatten p-ideals, and noncommutative $L^2$-space induced by a semifinite non-finite trace. These examples have motivated us to extend the theory from \cite{GBSICT} to the case of non-unital normed quasi *-algebras. In Corollary \ref{cor:quasibanach} we provide an extension of  \cite[Corollary 3.13]{GBSICT} to the  the case of  non-unital normed quasi *-algebras with a right approximate identity. To sum up, the main purpose and aim of Section 3 is to provide representations of bounded  positive C*-valued maps both on 
 non-unital operator algebras and on non-unital Banach quasi *-algebras. The main difference compared to the unital case is the lack of the existence of the cyclic vector. Therefore, the representations of positive C*-valued maps on unital (quasi) *-algebras obtained in \cite{GBSICT} are no longer valid in the non-unital case because those representations  are expressed in terms of the cyclic vector (see \cite[Corollary 3.10]{GBSICT} and \cite[Corollary 3.13]{GBSICT}) and cyclic vector is induced by the unit in the respective (quasi) *-algebra. For this reason, the non-unital case requires a to a certain extent different approach. Although we do apply the general GNS-construction from \cite{GBSICT}, some additional algebraic work and modifications are needed in the construction of *-representations for positive C*-valued maps on non-unital (quasi) *-algebras and this is exactly what the proofs in Section 3 provide.  In Section 4 we give examples of positive   C*-valued maps on concrete non-unital normed quasi *-algebras. \\

Every positive linear map on a *-algebra $\A$ induces in a natural way the corresponding positive sesquilinear map on $\A \times  \A$. On quasi *-algebras, because of the lack of an everywhere defined multiplication, in representation theory  it is sometimes more convenient to study positive sesquilinear maps instead of positive linear maps, as observed e.g. in \cite{MFCT}.
   
   In this paper, we provide first representations of  bounded positive  sesquilinear C*-valued maps on non-unital normed quasi *-algebras and then, in various corollaries, we obtain the above-mentioned representations of  bounded positive linear C*-valued maps.   Now, in addition for being a convenient tool in representation theory,  bounded  positive sesquilinear maps generate also several topologies on normed quasi *-algebras, as observed in \cite[Remark 3.1.32]{MFCT}. On the other hand, the topological dynamics of linear operators is in general an active research field in mathematics, see \cite{A,C,cd18}, and topological transitivity is one of the main concepts in this theory. The topological transitivity of cosine operator functions acting on a solid Banach function space (thus in particular on commutative $L^2$-spaces) has been studied  for instance in \cite{tsi}, whereas the topological dynamics of multipliers on Schatten p-ideals has been studied in \cite{IT2}, \cite{YRH} and \cite{ZLD}. Motivated by these facts, in Section \ref{sec: Inf VN algebras} we consider a noncommutative $L^2$-space equipped with a topology generated by a bounded positive sesquilinear form and we construct a topologically transitive right multiplier operator on this space. Also, we prove that the cosine operator function generated by this operator is topologically transitive as well.\\

The paper is organized as follows. In Section \ref{sec: Basic defn}, we recall some notions, concepts and results mainly from \cite{GBSICT} and \cite{MFCT} that are needed for the rest of the paper. In Section \ref{sec: construction repres}, we provide (general) representations of  a certain class of  bounded positive sesquilinear or linear C*-valued maps on non-unital normed quasi *-algebras possessing a certain {\em right approximate identity} (see Definition \ref{defn: rai}). In Section \ref{sec: appl and ex} we illustrate our results by concrete examples. For this purpose, in Example \ref{ex: 4.11} we construct positive linear map from a non-unital noncommutative $L^2$-space into the space of continuous functions on a closed interval, and it turns out that this operator can be considered as a (proper) generalization of an integral operator on a commutative $L^2$-space, as we argue in Remark \ref{genint}. In fact, most of our examples in Section 4 are devoted to the study of integral operators and their generalizations in the context of various normed non-unital quasi *-algebras. In Section \ref{sec: Inf VN algebras}, we consider topologically transitive operators on non commutative $L^2$-spaces equipped with a topology generated by a positive sesquilinear form. 

\section{Basic definitions and results}\label{sec: Basic defn}

A {\em quasi *-algebra} $(\A, \A_0)$ is a pair consisting of a vector space $\A$ and a *-algebra $\A_0$ contained in $\A$ as a subspace and such that
\begin{itemize}
	\item $\A$ carries an involution $a\mapsto a^*$ extending the involution of $\A_0$;
	\item $\A$ is  a bimodule over $\A_0$ and the module multiplications extend the multiplication of $\A_0$. In particular, the following associative laws hold:
	\begin{equation}\notag \label{eq_associativity}
		(xa)y = x(ay); \ \ a(xy)= (ax)y, \quad \forall \ a \in \A, \  x,y \in \A_0;
	\end{equation}
	\item $(ax)^*=x^*a^*$, for every $a \in \A$ and $x \in \A_0$.
\end{itemize}


We will always suppose that
\begin{align*}
	&ax=0, \; \forall x\in \A_0 \Rightarrow a=0 \\
	&ax=0, \; \forall a\in \A \Rightarrow x=0. 
\end{align*}
Clearly, both these conditions are automatically satisfied if $(\A, \A_0)$ has an identity $\id$.

\begin{definition}
    A  quasi *-algebra $(\A, \A_0)$ is said to be  {\em locally convex} if $\A$ is a locally convex vector space, with a topology $\tau$ enjoying the following properties
\begin{itemize}
	\item[{\sf (lc1)}] $c\mapsto c^*$, \ $c\in\A_0$,  is continuous;
	\item[{\sf (lc2)}] for every $a \in \A$, the maps  $c \mapsto ac$ and
	$c \mapsto ca$, from $\A_0$ into $\A$, $c\in \A_0$, are continuous;
	\item[{\sf (lc3)}] $\overline{\A_0}^\tau = \A$; i.e., $\A_0$ is dense in $\A[\tau]$.
\end{itemize}
The involution of $\A_0$ extends by continuity to $\A$. 
Moreover, if $\tau$ is a norm topology, with norm $\|\cdot\|$, and
\begin{itemize}
	\item[{\sf (bq*)}] $\|a^*\|=\|a\|, \; \forall a \in \A$
\end{itemize}
then, $(\A, \A_0)$ is called a {\em normed  quasi *-algebra} and a {\em Banach  quasi *-algebra} if the normed vector space $\A[\|\cdot\|]$ is complete.
\end{definition}

Let $\C$ be a $C^*-$algebra with norm $\|\cdot\|_\C$ and positive cone $\C^+$.\\

Let $\A$ be a complex vector space  
and $\S$ a  positive  sesquilinear $\C$-valued map on   $\A\times\A$  $$\S:(a,b)\in\A\times\A\to\S(a,b)\in\C;$$ i.e., a map with the properties  \begin{itemize}
	\item[$i)$] $\S(a,a)\in\C^+$,
	\item[$ii)$]$\S(\alpha a+\beta b,\gamma c)=\overline{\gamma}[\alpha\S( a,c)+\beta \S(b,c)]$,  
\end{itemize}
with $a,b,c \in\A$ and $\alpha,\beta,\gamma\in\mathbb{C}$. \\
By property $i)$ it follows that \begin{itemize}
	\item[$iii)$]  $\S(b,a)=\S(a,b)^*$, for all $a,b\in\A$.
\end{itemize}

\begin{lemma}\cite[Lemma 2.3]{GBSICT}\label{lemma 1} Let $\S$ be a positive  sesquilinear $\C$-valued map $\S$ on   $\A\times\A$. Then,  $$\|\S(a,b)\|_\C\leq2\|\S(a,a)\|_\C^{1/2}\|\S(b,b)\|_\C^{1/2}, \quad\forall a,b\in\A.$$   If $\C$ is commutative, then
		 $\S$ satisfies the Cauchy-Schwarz inequality:$$\|\S(a,b)\|_\C\leq\|\S(a,a)\|_\C^{1/2}\|\S(b,b)\|_\C^{1/2},\quad \forall a,b\in\A.$$
	\end{lemma}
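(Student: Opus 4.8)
The plan is to prove the two-variable bound first and then specialize to the commutative case.

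\medskip

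First I would establish the general inequality $\|\S(a,b)\|_\C \le 2\|\S(a,a)\|_\C^{1/2}\|\S(b,b)\|_\C^{1/2}$. The natural route is to apply a state (or more generally a pure state / norm-attaining functional) of $\C$ to reduce the $\C$-valued sesquilinear map to a scalar one. Concretely, for a fixed pair $a,b$, pick a state $\varphi$ on $\C$ with $\varphi(\S(a,b)^*\S(a,b)) = \|\S(a,b)\|_\C^2$, or equivalently use the fact that $\|c\|_\C = \sup_\varphi |\varphi(c)|$ over states to choose $\varphi$ with $|\varphi(\S(a,b))|$ close to $\|\S(a,b)\|_\C$. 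Then $(x,y)\mapsto \varphi(\S(x,y))$ is an ordinary positive semidefinite sesquilinear form on $\A\times\A$, so the scalar Cauchy–Schwarz inequality gives $|\varphi(\S(a,b))| \le \varphi(\S(a,a))^{1/2}\varphi(\S(b,b))^{1/2} \le \|\S(a,a)\|_\C^{1/2}\|\S(b,b)\|_\C^{1/2}$. This already yields the bound with constant $1$ along any single state; the factor $2$ enters because the norm of a self-adjoint element is recovered from states but the norm of a general element $\S(a,b)$ requires either passing through $\S(a,b)^*\S(a,b)$ (introducing a square-type estimate) or decomposing into real and imaginary (self-adjoint) parts, each of which picks up a Cauchy–Schwarz bound, and the triangle inequality then costs a factor of $2$. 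A clean way to see the $2$: write any $c\in\C$ with $\|c\|=1$ and use $\|c\| \le \|\Re c\| + \|\Im c\|$ where $\Re c = \frac12(c+c^*)$, each of the self-adjoint pieces $\Re\S(a,b)$, $\Im\S(a,b)$ coming from the self-adjoint sesquilinear forms $\frac12(\S(x,y)+\S(y,x))$ and $\frac1{2i}(\S(x,y)-\S(y,x))$, to which the scalar argument applies directly.

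\medskip

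Next, for the commutative case, the improvement to constant $1$ comes from the fact that in a commutative C*-algebra $\C \cong C_0(X)$ the norm is a sup of point evaluations, and point evaluations are (pure) states, so one does not lose the factor of $2$: evaluating $\S(\cdot,\cdot)$ at a point $t\in X$ gives a scalar positive semidefinite form $\S(\cdot,\cdot)(t)$, and $|\S(a,b)(t)| \le \S(a,a)(t)^{1/2}\S(b,b)(t)^{1/2} \le \|\S(a,a)\|_\C^{1/2}\|\S(b,b)\|_\C^{1/2}$ for every $t$; taking the supremum over $t$ finishes it. Equivalently, one can invoke the GNS-type observation that for each character $\chi$ of $\C$ the composition $\chi\circ\S$ is a scalar positive semidefinite sesquilinear form, and $\|c\|_\C = \sup_\chi|\chi(c)|$ when $\C$ is commutative.

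\medskip

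The main obstacle I anticipate is getting the constant exactly right in the noncommutative case, i.e.\ justifying that the factor $2$ (and not, say, $\sqrt2$) is what the argument delivers, and making sure the reduction to scalar forms is valid — in particular that $(x,y)\mapsto\varphi(\S(x,y))$ is genuinely sesquilinear with the same conventions (conjugate-linear in the first slot, as in property $ii)$) and positive semidefinite, so that the honest scalar Cauchy–Schwarz applies. Everything else is routine: the real/imaginary decomposition of a C*-algebra element, the characterization of the norm via states, and the sup over characters in the commutative case. Since this is quoted as \cite[Lemma 2.3]{GBSICT}, the paper will presumably just cite it, but the sketch above is the proof I would write out.
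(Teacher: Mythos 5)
The paper does not prove this lemma at all --- it is imported verbatim from \cite[Lemma 2.3]{GBSICT} --- so there is no in-paper proof to compare against; your argument stands on its own and is correct. The route you finally settle on (compose $\S$ with states $\varphi$ to get scalar positive sesquilinear forms, apply scalar Cauchy--Schwarz, recover $\|\cdot\|_\C$ of the self-adjoint elements $\Re\S(a,b)$ and $\Im\S(a,b)$ from states, and pay a factor $2$ in the triangle inequality; in the commutative case use characters, for which $\|c\|_\C=\sup_\chi|\chi(c)|$ holds for arbitrary $c$) is the standard proof and delivers exactly the stated constants. Two small points: your opening suggestion that one can ``choose $\varphi$ with $|\varphi(\S(a,b))|$ close to $\|\S(a,b)\|_\C$'' is not available for non-self-adjoint elements, since $\sup_\varphi|\varphi(c)|$ is only the numerical radius (which can be as small as $\|c\|/2$) --- but that is precisely the obstruction you then correctly circumvent via the real/imaginary decomposition (equivalently, $\|c\|\le 2\sup_\varphi|\varphi(c)|$ gives the factor $2$ in one step); and the form in this paper is conjugate-linear in the \emph{second} slot by property $ii)$, not the first, though this has no bearing on the estimate.
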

\bigskip
A positive  sesquilinear $\C$-valued map $\S$ is called {\em faithful} if $$\S(a,a)=0 \;\Rightarrow\; a=0.$$ 

\begin{definition}
    \label{defn: C*-valued quasi inner product}
Let $\A$ be a complex vector space. A  faithful positive  sesquilinear $\C$-valued map $\S$ on   $\A\times\A$ is said to be a {\em C*-valued quasi inner product} and we often will write $\ip{a}{b}_\S:=\S(a,b)$,  $a,b\in\A$.
\end{definition}
A C*-valued quasi inner product $\S:\A\times\A\to \C$    induces a quasi norm $\|\cdot\|_\S$ on $\A$:  

\label{vp norm}  \begin{equation*}
	\label{eq: norm phi def}
	\|a\|_\S:=\sqrt{\|\ip{a}{a}_\S\|_\C}=\sqrt{\|\S(a,a)\|_\C},\quad a\in\A.\end{equation*} 

\medskip
This means that
\begin{align} 	
	& \| a\|_\S\geq0, \quad \forall a\in\A \mbox{ and }\| a\|_\S=0\Leftrightarrow a=0,\nonumber\\
	& \|\alpha a\|_\S=|\alpha|\| a\|_\S, \qquad \forall \alpha\in\mathbb{C}, a\in\A,\nonumber\\ 
	&   \|a+b\|_\S\leq\sqrt{2}(\| a\|_\S+\| b\|_\S), \qquad \forall a,b\in\A.\label{eqn:Q3}
\end{align} The  inequality \eqref{eqn:Q3}  is due to Lemma \ref{lemma 1} (for details see \cite{GBSICT}).


The space $\A$ is then a quasi normed space w.r.to the quasi norm $\|\cdot\|_\S$.

\begin{definition} \cite{GBSICT}
	If the vector space  $\A$ is complete w.r.to the quasi norm $\|\cdot\|_\S$, it will be called a {\em quasi Banach space with $\C$-valued quasi inner product} or for short a {\em quasi $B_\C$-space}.
\end{definition}

\begin{remark}
   If $\S$ satisfies the Cauchy-Schwarz inequality (e.g. if either  $\C$ is commutative or both $\A$ is a bimodule over $\C$ and $\S$ is $\C$-linear), then we get a normed space and not just a quasi normed space, see also \cite{GBSICT}.
\end{remark}
Let $\X$ be a quasi $B_\C$-space and $\DD(X)$ a dense subspace of $\X$.
A linear map $X:\DD(X)\to \X$ is {\em $\S$-adjointable} if there exists a linear map $X^*$ defined on a subspace $\DD(X^*)\subset \X$ such that
$$\S(Xa,b)= \S(a, X^*b), \quad \forall a \in \DD(X), b\in \DD(X^*)$$
and $X^*$ is said  the $\S$-adjoint of $X$.\\

Let $\DD$ be a dense subspace of $\X$ and let us consider the following families of linear operators acting on $\DD$ \cite{GBSICT}:
\begin{align*}		{\LXH}&=\{X \mbox{ $\S$-adjointable}, \DD(X)=\DD;\; \DD(X^*)\supset \DD\} \\	{\Lc^\dagger(\DD)}&=\{X\in \LXH: X\DD\subset \DD; \; X^*\DD\subset \DD\} \\	{\Lc^\dagger(\DD)_b} &=\{Y\in \Lc^\dagger(\DD); \, {Y} \mbox{ is bounded on $\DD$} \}.\end{align*}
The involution in $\LXH$ is defined by		$X^\dag := X^*\upharpoonright \D$, the restriction of the $\S$-adjoint $X^*$ to $\DD$.

The sets $\LD$ and ${\Lc^\dagger(\DD)_b}$ are *-algebras.

\begin{remark} If $X\in \LXH$ then $X$ is closable (see \cite[Remark 2.8]{GBSICT}). 

\end{remark}

\begin{remark} \label{rem_partialmult}
	$\LXH$ is also a {\em partial *-algebra} \cite{ait_book}  with respect to the following operations: the usual sum $X_1 + X_2 $,
	the scalar multiplication $\lambda X$, the involution $ X \mapsto X\ad := X^* \up {\DD}$ and the \emph{(weak)}
	partial multiplication 
	defined whenever there   exists $Y\in \LXH$ such that
	$$\S(X_2 a,X_1b)= \S (Ya,b), \quad \forall a,b \in \DD.$$
	The element $Y$, if it exists, is unique and  $Y=X_1\mult X_2$.
\end{remark}

\bigskip
If $\S$ is not faithful, we can consider the set \begin{equation}\label{eq: NS}
    N_\S=\{a\in\A:\, \S(a,a)=0\}.
\end{equation}
The following two lemmas are proved in \cite{GBSICT}.
\begin{lemma}\label{N phi subspace}
	$ \N_\S$ is a subspace of $\A$.
\end{lemma}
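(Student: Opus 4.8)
The plan is to verify the two defining properties of a subspace directly; the only nontrivial ingredient is the generalized Cauchy–Schwarz-type inequality of Lemma \ref{lemma 1}, which controls the ``cross term'' $\S(a,b)$ by the ``diagonal'' values $\S(a,a)$ and $\S(b,b)$.

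First I would dispose of closure under scalar multiplication (and the fact that $0 \in N_\S$): given $a \in N_\S$ and $\alpha \in \CC$, property $ii)$ yields $\S(\alpha a, \alpha a) = \overline{\alpha}\,[\alpha\,\S(a,a)] = |\alpha|^2\,\S(a,a) = 0$, so $\alpha a \in N_\S$. For closure under addition, let $a,b \in N_\S$ and expand by sesquilinearity:
\[
\S(a+b,\,a+b) = \S(a,a) + \S(a,b) + \S(b,a) + \S(b,b) = \S(a,b) + \S(b,a),
\]
using $\S(a,a) = \S(b,b) = 0$. Now Lemma \ref{lemma 1} gives $\|\S(a,b)\|_\C \le 2\,\|\S(a,a)\|_\C^{1/2}\,\|\S(b,b)\|_\C^{1/2} = 0$, hence $\S(a,b) = 0$, and then property $iii)$ forces $\S(b,a) = \S(a,b)^* = 0$ as well. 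Therefore $\S(a+b,a+b) = 0$, i.e. $a+b \in N_\S$, and combined with the scalar case this shows $N_\S$ is a subspace.

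There is no real obstacle here: the ``hard part'' — that vanishing of the diagonal entries forces vanishing of the off-diagonal entry — is already packaged in Lemma \ref{lemma 1}, so no further positivity or completeness argument is needed. One could equally invoke only the weaker (commutative) Cauchy–Schwarz bound when applicable, but the inequality as stated in Lemma \ref{lemma 1} suffices in full generality.
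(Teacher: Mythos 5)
Your proof is correct and is the standard argument: the paper itself does not reproduce a proof (it cites \cite{GBSICT} for this lemma), and your route — sesquilinearity for scalar multiples, then the bound $\|\S(a,b)\|_\C\le 2\|\S(a,a)\|_\C^{1/2}\|\S(b,b)\|_\C^{1/2}$ of Lemma \ref{lemma 1} to kill the cross terms in $\S(a+b,a+b)$ — is exactly the expected one. The only point worth making explicit is that additivity in the second argument, used when expanding $\S(a+b,a+b)$, follows from property $ii)$ together with $iii)$, which you implicitly rely on.
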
 

We define 
a positive  sesquilinear $\C$-valued map on   $\A/\N_\S\times\A/\N_\S$ as follows: \begin{equation*}\ip{\cdot}{\cdot}_\S:\A/\N_\S\times\,\A/\N_\S\to\C\end{equation*} \begin{equation}\label{eq: inner pr phi semidef}
\ip{a+\N_\S}{ b+\N_\S}_\S:=\S(a,b).
\end{equation}The associated quasi norm is:\begin{equation}
\label{eq: norm phi semidef}
\|a+\N_\S\|_\S:=\sqrt{\|\S(a,a)\|_\C},\quad a\in\A.\end{equation}

It is easy to check that
\begin{lemma} $\A/\N_\S[\N_\S]$ is a quasi normed space.\end{lemma}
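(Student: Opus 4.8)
The plan is to verify that the functional defined in \eqref{eq: norm phi semidef}, namely $\|a+\N_\S\|_\S:=\sqrt{\|\S(a,a)\|_\C}$, is a genuine quasi norm on the vector space $\A/\N_\S$. By Lemma~\ref{N phi subspace} the set $\N_\S$ is a linear subspace of $\A$, so the quotient $\A/\N_\S$ is a well-defined vector space; hence only two points need attention: that the formula does not depend on the representative chosen in a coset, and that the resulting functional satisfies the three defining properties of a quasi norm (non-negativity together with definiteness, absolute homogeneity, and the quasi-triangle inequality with constant $\sqrt 2$). All the analytic content is already contained in Lemma~\ref{lemma 1}; the rest is bookkeeping.

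First I would prove well-definedness. The key observation is that each $n\in\N_\S$ is $\S$-null in the strong sense that $\S(n,b)=0$ for \emph{every} $b\in\A$: indeed, by Lemma~\ref{lemma 1},
\[
\|\S(n,b)\|_\C\le 2\|\S(n,n)\|_\C^{1/2}\|\S(b,b)\|_\C^{1/2}=0 .
\]
Consequently, for $a\in\A$ and $n\in\N_\S$, expanding by sesquilinearity and using property $iii)$,
\[
\S(a+n,a+n)=\S(a,a)+\S(a,n)+\S(n,a)+\S(n,n)=\S(a,a),
\]
so $\sqrt{\|\S(a+n,a+n)\|_\C}=\sqrt{\|\S(a,a)\|_\C}$ and the right-hand side of \eqref{eq: norm phi semidef} depends only on the coset $a+\N_\S$.

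Next I would run through the axioms on $\A/\N_\S$. Non-negativity is immediate. For definiteness, since $\|\cdot\|_\C$ is a norm on $\C$, $\|a+\N_\S\|_\S=0$ forces $\S(a,a)=0$, i.e.\ $a\in\N_\S$, i.e.\ $a+\N_\S$ is the zero coset --- this is precisely where the quotient by $\N_\S$ enters. Absolute homogeneity follows from $\S(\alpha a,\alpha a)=|\alpha|^2\S(a,a)$. For the quasi-triangle inequality I would reproduce the estimate behind \eqref{eqn:Q3}: writing $X=\|\S(a,a)\|_\C^{1/2}$ and $Y=\|\S(b,b)\|_\C^{1/2}$, the triangle inequality in $\C$, the identity $\|\S(b,a)\|_\C=\|\S(a,b)^*\|_\C=\|\S(a,b)\|_\C$, and Lemma~\ref{lemma 1} give
\[
\|\S(a+b,a+b)\|_\C\le X^2+4XY+Y^2\le 2(X+Y)^2 ,
\]
hence $\|(a+\N_\S)+(b+\N_\S)\|_\S\le\sqrt 2\,(\|a+\N_\S\|_\S+\|b+\N_\S\|_\S)$; by the previous paragraph the bound is independent of the chosen representatives.

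I do not anticipate a real obstacle here: the only genuinely new step is the implication $n\in\N_\S\Rightarrow\S(n,\cdot)\equiv 0$, and it is a one-line consequence of Lemma~\ref{lemma 1}; everything else is a transcription to the quotient of estimates already recorded for $\A$. Alternatively, one may organize the argument by first observing that $a\mapsto\sqrt{\|\S(a,a)\|_\C}$ is a quasi-seminorm on $\A$ whose null set is exactly $\N_\S$, and then quoting the standard fact that such a quasi-seminorm descends to a quasi norm on the quotient by its null set.
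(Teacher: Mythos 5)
Your proof is correct and fills in exactly the routine verification the paper leaves implicit (the paper merely prefaces the lemma with ``It is easy to check that''): well-definedness via the vanishing $\S(n,\cdot)\equiv 0$ for $n\in\N_\S$ deduced from Lemma~\ref{lemma 1}, and the quasi-triangle inequality with constant $\sqrt 2$ exactly as in \eqref{eqn:Q3}. No issues.
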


\begin{definition} \label{px}  Let $(\A,\A_0)$ be a  quasi *-algebra. We denote by $\QA$ the set of all  positive sesquilinear $\C$-valued maps on $\A \times \A$ that satisfy a property of invariance:
	\begin{itemize}
		\item[(I)]
		$\S(ax,y)=\S(x, a^*y), \quad \forall \ a \in \A, \ x,y \in \A_0$
	\end{itemize} and  call $\S\in\QA$ an {\em invariant} positive  sesquilinear $\C$-valued map on $\A\times\A$.
\end{definition}	
\begin{definition}\label{defn: rai}  Let $(\A,\A_0)$ be a non-unital quasi *-algebra and $\S:\A\times\A\to\C$ be a positive sesquilinear $\C$-valued map on $(\A,\A_0)$.
A net $\{e_\alpha\}_\alpha\subset\A_0$ is called a right	approximate identity for a  $(\A,\A_0)$ w.r. to  $\S$ if 
    $$\lim\limits_\alpha \S(a-ae_\alpha,a-ae_\alpha)=0,\quad \forall a\in\A.$$
 Further, if $(\A[\|\cdot\|],\A_0)$ is a normed quasi *-algebra, a right	approximate identity $\{e_\alpha\}_\alpha\subset\A_0$ for $(\A,\A_0)$ w.r. to  the norm $\|\cdot\|$ is called {\em strongly idempotent}  
if  $e_\alpha e_\beta=e_\alpha$ whenever $\alpha\leq\beta$.
\end{definition}


 \begin{definition} \label{defn_starrepmod} 
	Let $(\A,\A_0)$ be a non-unital quasi *-algebra.   Let $\DD_\pi$ be a dense subspace
	of a certain quasi $B_\C$-space $\X$ with
	$\C$-valued quasi inner product $\ip{\cdot}{\cdot}_\X$.   A linear map $\pi$ from $\A$ into ${\mathcal L}\ad(\DD_\pi,\X)$ is called  a \emph{*--representation} of  $(\A, \A_0)$,
	if the following properties are fulfilled:
	\begin{itemize}
		\item[(i)]  $\pi(a^*)=\pi(a)^\dagger:=\pi(a)^*\upharpoonright\DD_\pi, \quad \forall \ a\in \A$;
		\item[(ii)] for $a\in \A$ and $x\in \A_0$, $\pi(a)\mult\pi(x)$ is well--defined and {$\pi(a)\mult \pi(x)=\pi(ax)$}.
	\end{itemize}
	
	The *-representation $\pi$  is said to be\begin{itemize}
		\item {\em closable} if there exists $\widetilde{\pi}$ closure of $\pi$ defined as $\widetilde{\pi}(a)=\overline{\pi(a)}\upharpoonright{\widetilde{\D}_\pi}$ where $\widetilde{\D}_\pi$ is the completion under the graph topology $t_\pi$ defined by the seminorms $\xi\in\D_\pi\to\|\xi\|_\X+\|\pi(a)\xi\|_\X$, $a\in\A$, where $\|\cdot\|_\X$ is the norm induced {by the $\C$-valued inner product of $\X$};
		\item \emph{closed} if  $\DD_\pi[t_\pi]$ is complete. 
	\end{itemize}
 \end{definition}

\begin{definition}\label{defn: trans op}
 Let $\{T_n\}_n$ be a sequence of linear operators on a topological vector space $(X,\tau)$  equipped with a topology $\tau$. We say that $\{T_n\}_n$ is topologically transitive on $(X,\tau)$ if, for every two non-empty open subsets $\mathcal{O}_1,\mathcal{O}_2$ of $X$, there exists some $N\in\NN$ such that $T_N(\mathcal{O}_1)\cap\mathcal{O}_2\neq\emptyset.$ \\ A single linear  operator $T$  is topologically transitive if the sequence $\{T^n\}_n$ is topologically transitive.
\end{definition}

\section{Construction of *-representations for positive    $\C$-valued maps on non-unital quasi *-algebras }\label{sec: construction repres}
 In this section we obtain representations for positive   sesquilinear and linear $\C$-valued maps on normed non-unital quasi *-algebras with a  right approximate identity. However, before focusing on the special case of normed non-unital quasi *-algebras, we introduce first the following theorem regarding representation of positive sesquilinear C*-valued maps on more general (i.e. not necessarily normed) non-unital quasi *-algebras. 
\begin{theorem}\label{quasibanach} Let $(\A,\A_0)$ be a non-unital quasi *-algebra  and $\S\in\QA$. Suppose there exists  a net $\{e_\alpha\}_\alpha\subset\A_0$   such that $e_\alpha e_\beta=e_\alpha$ whenever $\alpha\leq\beta$.  Then the following statements are equivalent.
\begin{itemize}
\item[(i)] The net $\{e_\alpha\}_\alpha$ is an approximate identity for $(\A,\A_0)$ w.r. to $\S$. Moreover, for any fixed $e_\alpha$, for every $a\in\A$ there exists a sequence $\{a_n\}_n\subset\A_0$ such that
$\lim\limits_{n\to\infty}\S((a-a_n)e_\alpha,(a-a_n)e_\alpha)=0$.
\item[(ii)] There exist a quasi $B_\C$-space $\X_\S$ with $\C-$valued quasi inner product  $\langle\cdot,\cdot\rangle_{\X_\S}$, a dense subspace $\D_\S\subset \X_\S$, a net $\{\varepsilon_\alpha\}_\alpha\subset\D_\S$ and a closed $*-$representation $\pi_\S$ in  ${\mathcal L}\ad(\D_\S,\X_\S)$ such that for all $a,b\in\A$,
$$\S(a,b)=\lim_\alpha\langle \pi_\S(a)\varepsilon_\alpha,\pi_\S(b)\varepsilon_\alpha\rangle_{\X_\S}.$$
Moreover, $\pi_\S(e_\alpha)\varepsilon_\beta=\varepsilon_\alpha$ whenever $\alpha\leq\beta$ and, for all $a\in\A$ and every $\alpha$, we have that $$\lim_{\beta}\langle \pi_\S(a)(\varepsilon_\beta-\varepsilon_\alpha),\pi_\S(a)(\varepsilon_\beta-\varepsilon_\alpha)\rangle_{\X_\S}$$ exists and 
$$\lim_{\alpha}\lim_{\beta}\langle \pi_\S(a)(\varepsilon_\beta-\varepsilon_\alpha),\pi_\S(a)(\varepsilon_\beta-\varepsilon_\alpha)\rangle_{\X_\S}=0.$$
Finally, given $\varepsilon_\alpha$, for each $a\in\A$ there exists a sequence $\{a_n\}_n\subset\A_0$ such that
$$\pi_\S(a_n)\varepsilon_\alpha\to\pi_\S(a)\varepsilon_\alpha\quad \textrm{in }\X_\S,\quad \mbox{ as } n\to\infty.$$
\end{itemize}
\end{theorem}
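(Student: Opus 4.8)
The plan is to prove the two implications separately, with the bulk of the work in $(i)\Rightarrow(ii)$, which is essentially a GNS-type construction adapted to the non-unital setting. Assuming $(i)$, I would first apply the general GNS-construction from \cite{GBSICT} to the invariant positive sesquilinear map $\S\in\QA$: this produces a quasi $B_\C$-space $\X_\S$, obtained by completing $\A/\N_\S$ with respect to the quasi norm $\|\cdot\|_\S$, together with a dense subspace $\D_\S$ (the image of $\A_0$, or of $\A$ acting on vectors, under the canonical map $\lambda_\S: a \mapsto a+\N_\S$) and a $*$-representation $\pi_\S$ acting by left multiplication, $\pi_\S(a)\lambda_\S(x)=\lambda_\S(ax)$ for $x\in\A_0$, extended appropriately. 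The key new point is the net of vectors: I would set $\varepsilon_\alpha := \lambda_\S(e_\alpha)\in\D_\S$. Property (I) of $\S$ together with $e_\alpha e_\beta = e_\alpha$ for $\alpha\le\beta$ gives $\pi_\S(e_\alpha)\varepsilon_\beta = \lambda_\S(e_\alpha e_\beta)=\lambda_\S(e_\alpha)=\varepsilon_\alpha$, establishing that relation directly. For the limit formula $\S(a,b)=\lim_\alpha\langle\pi_\S(a)\varepsilon_\alpha,\pi_\S(b)\varepsilon_\alpha\rangle_{\X_\S}$, I would use that $\langle\pi_\S(a)\varepsilon_\alpha,\pi_\S(b)\varepsilon_\alpha\rangle_{\X_\S}=\S(a e_\alpha, b e_\alpha)$ (again by invariance and the definition of the inner product on $\A/\N_\S$), and then show $\S(ae_\alpha,be_\alpha)\to\S(a,b)$ using the approximate-identity hypothesis and the norm inequality from Lemma \ref{lemma 1}: writing $\S(ae_\alpha,be_\alpha)-\S(a,b) = \S(ae_\alpha-a,be_\alpha)+\S(a,be_\alpha-b)$ and bounding each term by $2\|\S(ae_\alpha-a,ae_\alpha-a)\|_\C^{1/2}$ times a factor controlled by $\sup_\alpha\|\S(be_\alpha,be_\alpha)\|_\C^{1/2}<\infty$ (the sup is finite because $\S(be_\alpha,be_\alpha)\to\S(b,b)$).

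Next I would address the iterated-limit statement about $\pi_\S(a)(\varepsilon_\beta-\varepsilon_\alpha)$. Since $\varepsilon_\beta-\varepsilon_\alpha = \lambda_\S(e_\beta-e_\alpha)$ and $\pi_\S(a)(\varepsilon_\beta-\varepsilon_\alpha)=\lambda_\S(a(e_\beta-e_\alpha))$, the quantity in question equals $\S(a(e_\beta-e_\alpha), a(e_\beta-e_\alpha))$. For fixed $\alpha$, I would show the limit in $\beta$ exists by expanding this into four terms $\S(ae_\beta,ae_\beta)$, $-\S(ae_\beta,ae_\alpha)$, $-\S(ae_\alpha,ae_\beta)$, $\S(ae_\alpha,ae_\alpha)$; the first tends to $\S(a,a)$ by (i), the two cross terms tend to $\S(a,ae_\alpha)$ and $\S(ae_\alpha,a)$ respectively by the same Lemma \ref{lemma 1} estimate used above, and the last is constant, so the $\beta$-limit is $\S(a,a)-\S(a,ae_\alpha)-\S(ae_\alpha,a)+\S(ae_\alpha,ae_\alpha)=\S(a-ae_\alpha,a-ae_\alpha)$. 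Taking then the limit over $\alpha$ and invoking (i) once more gives $0$. Finally, the approximability statement: by the second half of hypothesis (i), given $e_\alpha$ and $a\in\A$ there is $\{a_n\}\subset\A_0$ with $\S((a-a_n)e_\alpha,(a-a_n)e_\alpha)\to 0$, i.e. $\|\pi_\S(a)\varepsilon_\alpha-\pi_\S(a_n)\varepsilon_\alpha\|_{\X_\S}^2 = \|\lambda_\S((a-a_n)e_\alpha)\|_\S^2\to 0$, which is exactly the required convergence in $\X_\S$. Closedness of $\pi_\S$ is obtained, as in \cite{GBSICT}, by passing to the closure $\widetilde{\pi_\S}$ if necessary, noting all the stated relations are preserved under this operation since they are continuous in the relevant topologies.

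For the converse $(ii)\Rightarrow(i)$, I would simply read the formula $\S(a,b)=\lim_\alpha\langle\pi_\S(a)\varepsilon_\alpha,\pi_\S(b)\varepsilon_\alpha\rangle_{\X_\S}$ backwards. To show $\{e_\alpha\}$ is an approximate identity w.r.t.\ $\S$, fix $a\in\A$ and compute $\S(a-ae_\alpha,a-ae_\alpha)$ using that $\pi_\S$ is a $*$-representation and $\pi_\S(e_\alpha)\varepsilon_\beta=\varepsilon_\alpha$: for $\beta\ge\alpha$ one gets $\pi_\S(a-ae_\alpha)\varepsilon_\beta = \pi_\S(a)\varepsilon_\beta - \pi_\S(a)\pi_\S(e_\alpha)\varepsilon_\beta = \pi_\S(a)(\varepsilon_\beta-\varepsilon_\alpha)$, hence $\S(a-ae_\alpha,a-ae_\alpha)=\lim_\beta\langle\pi_\S(a)(\varepsilon_\beta-\varepsilon_\alpha),\pi_\S(a)(\varepsilon_\beta-\varepsilon_\alpha)\rangle_{\X_\S}$, and this tends to $0$ as $\alpha\to\infty$ precisely by the iterated-limit hypothesis in (ii). (A small technical point: one must justify that $\pi_\S(ae_\alpha)\mult$-type manipulations and the partial multiplication $\pi_\S(a)\mult\pi_\S(e_\alpha)=\pi_\S(ae_\alpha)$ are legitimate, which follows from $e_\alpha\in\A_0$ and property (ii) of Definition \ref{defn_starrepmod}.) The second half of (i) follows immediately from the last displayed approximability statement in (ii), since $\|\lambda$-type$\|$ convergence $\pi_\S(a_n)\varepsilon_\alpha\to\pi_\S(a)\varepsilon_\alpha$ in $\X_\S$ is, after writing the inner product out, the same as $\S((a-a_n)e_\alpha,(a-a_n)e_\alpha)\to 0$.

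I expect the main obstacle to be the careful bookkeeping in the $(i)\Rightarrow(ii)$ direction around \emph{which} objects live where: the GNS space is built from $\A_0$ but the vectors $\pi_\S(a)\varepsilon_\alpha$ for general $a\in\A$ need not a priori lie in the initial algebraic quotient, so one must check that $\pi_\S(a)$ genuinely extends to a densely defined $\S$-adjointable operator on $\D_\S$ with $\D_\S$ chosen large enough to contain all $\varepsilon_\alpha$ and all $\pi_\S(b)\varepsilon_\alpha$ — this is the ``additional algebraic work'' alluded to in the introduction. The interchange-of-limits and the finiteness of the relevant suprema are routine once Lemma \ref{lemma 1} is in hand, and closedness is handled exactly as in the unital case of \cite{GBSICT}; it is the construction of the domain $\D_\S$ and the verification that $\pi_\S$ is a bona fide $*$-representation in the sense of Definition \ref{defn_starrepmod} that will require the most care.
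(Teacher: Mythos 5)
Your proposal follows essentially the same route as the paper's proof: set $\varepsilon_\alpha=e_\alpha+\N_\S$ in the GNS space of \cite{GBSICT}, get $\pi_\S(e_\alpha)\varepsilon_\beta=\varepsilon_\alpha$ from strong idempotence, identify the iterated $\beta$-limit with $\S(a-ae_\alpha,a-ae_\alpha)$, and read the converse off the same identities. The one step you defer to the end --- that $\D_\S$ is dense, i.e.\ that $\A_0/\N_\S$ is dense in $\A/\N_\S$ so that the GNS theorem of \cite{GBSICT} applies --- is exactly where the paper combines both halves of (i), via the quasi-triangle inequality $\|a-a_ne_\alpha+\N_\S\|_\S\leq\sqrt{2}\left(\|a-ae_\alpha+\N_\S\|_\S+\|(a-a_n)e_\alpha+\N_\S\|_\S\right)$; you already have both estimates in hand, so this closes routinely.
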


\begin{proof}$(i)\Rightarrow(ii):$ Note that $\A_0/\N_\S$ is dense in $\A/\N_\S$. Indeed, given $a\in\A$ and $\delta>0$, by assumption $(i)$ we can find some $e_\alpha$, some sequence $\{a_n\}\subset\A_0$ and a  $N\in\NN$ such that for every $n>N$  we have
$$\|\S(a-ae_\alpha,a-ae_\alpha)\|_\C^{1/2}<\delta/4\quad \mbox{ and }\quad\|\S(ae_\alpha-a_ne_\alpha,ae_\alpha-a_ne_\alpha)\|_\C^{1/2}<\delta/4.$$
Hence,
$$\|a-a_ne_\alpha+\N_\S\|_\S\leq \left(\|a-ae_\alpha+\N_\S\|_\S+\|ae_\alpha-a_ne_\alpha+\N_\S\|_\S\right)<\delta.$$
 Therefore, $\S$ satisfies $(i)$ in \cite[Theorem 3.2]{GBSICT}, hence we can proceed as in \cite[Theorem 3.2]{GBSICT} and construct the designed closed $*-$representation $\pi_\S$. For given $\alpha$, set $\varepsilon_\alpha:=e_\alpha+\N_\S$. Then
\begin{equation}\label{eq: ealpha leq ebeta}
    \pi_\S(e_\alpha)\varepsilon_\beta=e_\alpha e_\beta+\N_\S=e_\alpha+\N_\S=\varepsilon_\alpha,\quad \forall \alpha\leq\beta.
\end{equation}
 Next, since $\langle\cdot,\cdot\rangle_\S$ is jointly continuous, for every $a,b\in\A$ we have
\begin{align*}\S(a,b)&=\langle a+\N_\S,b+\N_\S\rangle_\S=\lim_\alpha\langle ae_\alpha+\N_\S,be_\alpha+\N_\S\rangle_\S\\&=\lim_\alpha\langle \pi_\S(a)\varepsilon_\alpha,\pi_\S(b)\varepsilon_\alpha\rangle_{\X_\S}.    
\end{align*}
Now,  for each $\alpha$ we have 
\begin{align*}
&\S(a-ae_\alpha,a-ae_\alpha)=\lim_\beta\langle\pi_\S(a-ae_\alpha)\varepsilon_\beta,\pi_\S(a-ae_\alpha)\varepsilon_\beta \rangle_{\X_\S}\\
&=\lim_\beta\langle\left(\pi_\S(a)-\pi_\S(a) \mult\pi_\S(e_\alpha)\right)\varepsilon_\beta,\left(\pi_\S(a)-\pi_\S(a) \mult\pi_\S(e_\alpha)\right)\varepsilon_\beta \rangle_{\X_\S}\\
&=\lim_\beta\langle\pi_\S(a)\varepsilon_\beta-\pi_\S(a)\varepsilon_\alpha,\pi_\S(a)\varepsilon_\beta-\pi_\S(a)\varepsilon_\alpha \rangle_{\X_\S},
\end{align*}
where we have used that $\pi_\S(e_\alpha)\varepsilon_\beta=\varepsilon_\alpha$ whenever $\alpha\leq \beta$. It follows that
$$\lim_{\alpha}\lim_{\beta}\langle \pi_\S(a)(\varepsilon_\beta-\varepsilon_\alpha),\pi_\S(a)(\varepsilon_\beta-\varepsilon_\alpha)\rangle_{\X_\S}=\lim_\alpha\S(a-ae_\alpha,a-ae_\alpha)=0.$$ Finally, given $a\in\A$ and some $e_\alpha$, we choose a sequence $\{a_n\}_n\subseteq\A_0$ such that $\lim_{n\to\infty}\S((a-a_n)e_\alpha,(a-a_n)e_\alpha)=0$. Since $\pi_\S(e_\alpha)\varepsilon_\beta=\varepsilon_\alpha$, for $\alpha\leq\beta$, we get
\begin{align*}
&\lim_{n\to\infty}\langle(\pi_\S(a)-\pi_\S(a_n))\varepsilon_\alpha,(\pi_\S(a)-\pi_\S(a_n))\varepsilon_\alpha\rangle_{\X_\S}\\
&=\lim_{n\to\infty}\lim_\beta\langle(\pi_\S(a)-\pi_\S(a_n))\mult\pi_\S(e_\alpha)\varepsilon_\beta,(\pi_\S(a)-\pi_\S(a_n))\mult\pi_\S(e_\alpha)\varepsilon_\beta\rangle_{\X_\S}\\
&=\lim_{n\to\infty}\S(ae_\alpha-a_ne_\alpha,ae_\alpha-a_ne_\alpha)=0,
\end{align*}
which shows that $\pi_\S(\A_0)\varepsilon_\alpha$ is dense in $\pi_\S(\A)\varepsilon_\alpha$, for any $\alpha$.\\

$(ii)\Rightarrow (i):$ Since $\pi_\S$ is a $*-$representation, by hypotheses it is clear that $\S(a,a)\in\C^+$ and $\S(ax,y)=\S(x,a^*y)$ for all $a\in\A$, $x,y\in\A_0$. Next, by the previous calculations, we have 
$$\lim_{\alpha}\S(a-ae_\alpha,a-ae_\alpha)=\lim_\alpha
\lim_\beta\langle\pi_\S(a)(\varepsilon_\beta-\varepsilon_\alpha),\pi_\S(a)(\varepsilon_\beta-\varepsilon_\alpha)\rangle_{\X_\S}=0,$$ where $\pi_\S(e_\alpha)\varepsilon_\beta=\varepsilon_\alpha$ whenever $\alpha\leq \beta$. Finally, for $a\in\A$, since $\pi_\S(\A_0)\varepsilon_\alpha$ is dense in $\pi_\S(\A)\varepsilon_\alpha$, choose a sequence $\{a_n\}\subset\A_0$ such that $$\pi_\S(a_n)\varepsilon_\alpha\to\pi_\S(a)\varepsilon_\alpha,\quad n\to\infty.$$
Then, by the previous calculations we get
\begin{multline*}
  \lim_{n\to\infty}\S((a-a_n)e_\alpha,(a-a_n)e_\alpha)\\=\lim_{n\to\infty}\langle(\pi_S(a)-\pi_\S(a_n))\varepsilon_\alpha,(\pi_S(a)-\pi_\S(a_n))\varepsilon_\alpha\rangle_{\X_\S}=
0.  
\end{multline*}
\end{proof}

If $\S$ satisfies Theorem \ref{quasibanach}, then it will be called *-representable.\\

From now on in this section, we shall consider normed non-unital quasi *-algebras with a right approximate identity. 
\begin{remark}\label{ S bounded + appr id= repres}
	Let $(\A[\|\cdot\|],\A_0)$ be a normed quasi *-algebra, and let $\{e_\alpha\}_\alpha$ be  a right approximate identity for $(\A,\A_0)$.  If $a\in\A$ and $\{a_n\}\subset\A_0$ is such that $a-a_n\to 0$ as $n\to \infty$, then  for each fixed $e_\alpha$ we have that also $(a-a_n)e_\alpha\to 0$ as $n\to \infty$ because the right multiplication by $e_\alpha$ is continuous. Therefore, every bounded $\S\in\QA$ satisfies $(i)$ in Theorem \ref{quasibanach} and hence is *-representable.
	In fact, in this case, it is no longer required that the approximate identity is strongly idempotent. Indeed, by the construction of the $*-$representation from \cite{GBSICT}, combined with the fact that $\varepsilon_\alpha=e_\alpha+\N_\S$, for all $\alpha$,  it follows that 
	$$\S(ae_\alpha-ae_\beta,ae_\alpha-ae_\beta)=\langle \pi_\S(a)(\varepsilon_\alpha-\varepsilon_\beta),\pi_\S(a)(\varepsilon_\alpha-\varepsilon_\beta)\rangle_{\X_\S},$$
	for all $a\in\A$, and all indices $\alpha,\beta$. Hence, if $\S$ is bounded, we deduce that, for all $\alpha$ and all $a\in\A$, we have
	$$\lim_\beta \S(ae_\alpha-ae_\beta,ae_\alpha-ae_\beta)=\S(ae_\alpha-a,ae_\alpha-a)$$ and $$ \lim_\alpha \lim_\beta  \S(ae_\alpha-ae_\beta,ae_\alpha-ae_\beta)=0.$$
	Moreover, $\pi_\S(\A_0)\varepsilon_\alpha$ is dense in $\pi_\S(\A)\varepsilon_\alpha$ for every  $\alpha$. Indeed, let $\{a_n\}_n\subset\A_0$ be such that  $a_n\to a$ in $\A$, then for every $\alpha$ it is $\lim\limits_{\alpha}(a-a_n)e_\alpha=0$ and by the boundedness of $\S$ and the density of $\A_0$ in $\A$ we get
	$$\|\S((a-a_n)e_\alpha,(a-a_n)e_\alpha)\|_\C=\left\|\left(\pi_\S(a)-\pi_\S(a_n)\right)\varepsilon_\alpha\right\|^2_{\X_\S}\to0, \quad\mbox{  as }n\to\infty.$$
	\end{remark}

\begin{corollary}\label{cor: repres omega}
Let $\A$ be a non-unital $C^*-$algebra and $\omega$ be a bounded linear positive map from $\A$ into another $C^*-$algebra $\C$. Then there exist a quasi $B_\C$-space $\X$, a net $\{\varepsilon_\alpha\}_{\alpha}$ in $\X$ and a bounded $*-$representation  $\pi$ in $\B(\X)$ such that 
\begin{equation}\label{eq: first}
    \omega(a)=\lim_\alpha\langle \pi(a)\varepsilon_\alpha,\varepsilon_\alpha\rangle_\X,\quad \forall a\in\A.
\end{equation}
 Moreover,  for all $a\in\A$ and every $\alpha$ the limit $\lim_\beta\langle\pi(a)(\varepsilon_\beta-\varepsilon_\alpha),\varepsilon_\beta-\varepsilon_\alpha\rangle_\X$ exists and we have 
\begin{equation}\label{eq: second}
    \lim_\alpha(\lim_\beta\langle\pi(a)(\varepsilon_\beta-\varepsilon_\alpha),\varepsilon_\beta-\varepsilon_\alpha \rangle_\X)=0.
\end{equation}
\end{corollary}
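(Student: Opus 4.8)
The plan is to reduce this corollary about $C^*$-algebras to the already-established Theorem~\ref{quasibanach} by regarding a non-unital $C^*$-algebra $\A$ as a (normed, non-unital) quasi *-algebra over itself, i.e.\ taking $(\A,\A_0)$ with $\A_0=\A$. The linear positive map $\omega:\A\to\C$ induces a sesquilinear $\C$-valued map $\S$ on $\A\times\A$ by $\S(a,b):=\omega(b^*a)$; positivity of $\omega$ gives $\S(a,a)=\omega(a^*a)\in\C^+$, and the invariance property (I) is immediate since $\S(ax,y)=\omega(y^*ax)=\omega((a^*y)^*x)=\S(x,a^*y)$. Boundedness of $\omega$ together with the $C^*$-identity yields boundedness of $\S$, so $\S\in\QA$ is bounded. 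Since any non-unital $C^*$-algebra possesses an approximate identity $\{e_\alpha\}_\alpha$ (which can be taken increasing), Remark~\ref{ S bounded + appr id= repres} applies and $\S$ is *-representable: Theorem~\ref{quasibanach} provides a quasi $B_\C$-space $\X_\S$, a dense $\D_\S$, a net $\{\varepsilon_\alpha\}_\alpha\subset\D_\S$ and a closed *-representation $\pi_\S$ with
$$\S(a,b)=\lim_\alpha\langle\pi_\S(a)\varepsilon_\alpha,\pi_\S(b)\varepsilon_\alpha\rangle_{\X_\S},$$
the idempotency-type relations, and the iterated-limit condition \eqref{eq: second} automatically.

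Next I would recover $\omega$ itself. Since $\{e_\alpha\}$ is an approximate identity for the $C^*$-algebra, $ae_\beta\to a$ in norm, hence (using boundedness of $\S$, and that $\varepsilon_\beta=e_\beta+\N_\S$) one gets
$$\omega(a)=\lim_\beta\omega(e_\beta^* a e_\beta)=\lim_\beta\S(ae_\beta,e_\beta)=\lim_\beta\langle\pi_\S(a)\varepsilon_\beta,\varepsilon_\beta\rangle_{\X_\S},$$
after checking that $\pi_\S(a)\varepsilon_\beta=\pi_\S(ae_\beta)\varepsilon_\beta$ via the partial-multiplication identity $\pi_\S(a)\mult\pi_\S(e_\beta)=\pi_\S(ae_\beta)$ from Definition~\ref{defn_starrepmod}(ii). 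This is precisely \eqref{eq: first} with $\pi:=\pi_\S$. The relation \eqref{eq: second} is just a specialization (take $b=e$ suitably, or rather note it follows from the second conclusion of Theorem~\ref{quasibanach} with the inner slot paired against $\varepsilon_\beta-\varepsilon_\alpha$ directly): indeed $\langle\pi_\S(a)(\varepsilon_\beta-\varepsilon_\alpha),\varepsilon_\beta-\varepsilon_\alpha\rangle_{\X_\S}$ can be rewritten, using $\pi_\S(e_\gamma)\varepsilon_\delta=\varepsilon_\gamma$ for $\gamma\le\delta$ and invariance, in terms of $\omega$ applied to elements of the form $(ae_\beta-ae_\alpha)$ against $(e_\beta-e_\alpha)$, and the double limit vanishes by the approximate-identity property.

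The remaining point is to upgrade the closed *-representation $\pi_\S$ into ${\mathcal L}\ad(\D_\S,\X_\S)$ to a genuinely \emph{bounded} *-representation $\pi$ into $\B(\X)$, where $\X$ is an honest Banach space (indeed a Hilbert $C^*$-module type object). Here I would use boundedness of $\omega$ and the $C^*$-structure: for $a\in\A$ and $x\in\A_0=\A$, $\|\pi_\S(a)(x+\N_\S)\|_{\X_\S}^2=\|\omega(x^*a^*ax)\|_\C\le\|a\|^2\|\omega(x^*x)\|_\C=\|a\|^2\|x+\N_\S\|_{\X_\S}^2$, using $x^*a^*ax\le\|a\|^2 x^*x$ in $\C^+$ and positivity (monotonicity) of $\omega$. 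Hence each $\pi_\S(a)$ extends to a bounded operator on the completion $\X$ of $\A/\N_\S$, with $\|\pi_\S(a)\|\le\|a\|$, and one checks the extension is still multiplicative and *-preserving by density. I expect this boundedness/extension step to be the main obstacle — not because it is deep, but because it requires care: one must verify that the completion $\X$ coincides with $\X_\S$ (or embeds densely and compatibly), that the net $\{\varepsilon_\alpha\}$ lands in $\X$, and that passing from the partial multiplication $\mult$ on ${\mathcal L}\ad(\D_\S,\X_\S)$ to ordinary composition in $\B(\X)$ is legitimate once everything is bounded; the operator-inequality $x^*a^*ax\le\|a\|^2x^*x$ together with positivity of $\omega$ is the key computational input that makes it go through.
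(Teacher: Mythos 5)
Your proposal is correct and follows the same overall architecture as the paper: set $\S(a,b)=\omega(b^*a)$, take $\A_0=\A$ with the canonical approximate identity, invoke Remark~\ref{ S bounded + appr id= repres} and Theorem~\ref{quasibanach} to get $\pi_\S$ and the net $\varepsilon_\alpha=e_\alpha+\N_\S$, and establish boundedness of each $\pi_\S(a)$ from $b^*a^*ab\leq\|a\|^2b^*b$ together with positivity of $\omega$ --- that last computation is exactly the one in the paper. Where you genuinely diverge is in recovering $\omega(a)$ from the representation of $\S$: you use norm-continuity of $\omega$ and the two-sidedness and self-adjointness of the canonical approximate identity to write $\omega(a)=\lim_\beta\omega(e_\beta^*ae_\beta)=\lim_\beta\S(ae_\beta,e_\beta)=\lim_\beta\langle\pi_\S(a)\varepsilon_\beta,\varepsilon_\beta\rangle_{\X_\S}$, whereas the paper first notes $\omega(a)=\S(a^{1/2},a^{1/2})=\lim_\alpha\langle\pi_\S(a)\varepsilon_\alpha,\varepsilon_\alpha\rangle_{\X_\S}$ for $a\in\A^+$ via $\pi_\S(a^{1/2})^\dagger\pi_\S(a^{1/2})$ and then decomposes a general $a$ as a linear combination of four positive elements. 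Your route is more direct and avoids the functional calculus, at the price of using that the approximate identity is two-sided (automatic for $C^*$-algebras but not available in the later quasi *-algebra setting); the paper's positive-decomposition argument has the advantage that the identical device disposes of \eqref{eq: second} in one line (apply the theorem's double-limit conclusion to $a^{1/2}$), whereas your sketch of \eqref{eq: second} via $\omega\bigl((e_\beta-e_\alpha)^*a(e_\beta-e_\alpha)\bigr)$ still needs Lemma~\ref{lemma 1} to control the off-diagonal term and a short argument for the existence of the inner limit. Your closing worries about passing from ${\mathcal L}\ad(\D_\S,\X_\S)$ to $\B(\X_\S)$ are resolved exactly as you anticipate: the norm bound on the dense domain is all the paper supplies, and the extension by density is routine.
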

\begin{proof}
Define $\S:\A\times\A\to\C$ by $\S(a,b)=\omega(b^*a)$. Then $\S$ is a bounded invariant bounded positive sesquilinear $\C-$valued map on $\A\times\A$. Let $\{e_\alpha\}_{\alpha}$ be a right approximate identity for $\A$.  
Since $\A$ is a $C^*-$algebra, take $\A_0:=\A$, hence by Remark \ref{ S bounded + appr id= repres} we deduce that
$$\omega(a^*a)=\S(a,a)=\lim_\alpha\langle\pi_\S(a)\varepsilon_\alpha,\pi_\S(a)\varepsilon_\alpha\rangle_{\X_\S}$$
for all $a\in\A$, where $\varepsilon_\alpha:=e_\alpha+\N_\S$ as before.  Hence, $\omega$ is determined on the cone of positive elements in $\A$. In particular, if $a\in\A^+$, then $a=a^{1/2}a^{1/2}$, so we obtain $$\omega(a)=\lim_\alpha\langle \pi_\S(a^{1/2})\varepsilon_\alpha,\pi_\S(a^{1/2})\varepsilon_\alpha\rangle_{\X_\S}=\lim_\alpha \langle\pi_\S(a)\varepsilon_\alpha,\varepsilon_\alpha\rangle_{\X_\S},$$ because $\pi_\S$ is a $*-$representation. Furthermore, every element of $\A$ can be written as a linear combination of four positive elements in $\A$, hence we deduce that $\omega(a)=\lim_\alpha\langle\pi_\S(a)\varepsilon_\alpha,\varepsilon_\alpha\rangle_{\X_\S}$ for all $a\in\A$. Moreover, since by Remark \ref{ S bounded + appr id= repres}  for all $a\in\A$ and  every $\alpha$ the limit $\lim_{\beta}\langle \pi_\S(a)(\varepsilon_\beta-\varepsilon_\alpha),\pi_\S(a)(\varepsilon_\beta-\varepsilon_\alpha)\rangle_{\X_\S}$ exists and 
$$\lim_{\alpha}\lim_{\beta}\langle \pi_\S(a)(\varepsilon_\beta-\varepsilon_\alpha),\pi_\S(a)(\varepsilon_\beta-\varepsilon_\alpha)\rangle_{\X_\S}=0,$$ by the same arguments as above we get (\ref{eq: second}).\\
 Finally, we have that $\pi_\S(a)$ is a bounded linear operator for all $a\in\A$. To see this, first notice that for all $a,b\in\A$ we have 
$b^*a^*ab\leq\|a\|^2b^*b$. Since $\omega$ is positive we get that
$\omega(b^*a^*ab)\leq\|a\|^2\omega(b^*b)$, which yields the boundedness of $\pi_\S(a)$ for every $a\in\A$:
$$\|\pi_\S(a)(b+\N_\S)\|_{\X_\S}^2=\|\S(ab,ab)\|_\C=\|\omega(b^*a^*ab)\|_\C\leq\|a\|^2\|b+\N_S\|_\S^2, \,\forall b\in\A.$$
 
\end{proof}

\begin{corollary}\label{cor: 3.10}
Let $\A$ be a non-unital $C^*-$algebra and $\omega$ be a bounded linear positive map from $\A$ into another $C^*-$algebra $\C$. Then,
$$4\|\omega\|\,\|\omega(a^*a)\|_\C\geq\|\omega(a)\|_\C^2=\|\omega(a^*)\|_\C\,\|\omega(a)\|_\C, \quad\forall a\in\A.$$
If  $\C$ is commutative, then
$$\|\omega\|\,\|\omega(a^*a)\|_\C\geq\|\omega(a)\|_\C^2=\|\omega(a^*)\|_\C\,\|\omega(a)\|_\C, \quad\forall a\in\A.$$

\end{corollary}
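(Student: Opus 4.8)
The plan is to read off both estimates from Lemma~\ref{lemma 1} applied to the $\C$-valued quasi inner product of the representation space supplied by Corollary~\ref{cor: repres omega}, together with the elementary fact that a positive linear map between $C^*$-algebras is $*$-preserving. Concretely, I would put $\S(a,b):=\omega(b^*a)$ and $\A_0:=\A$, exactly as in the proof of Corollary~\ref{cor: repres omega}, so that $\S$ is a bounded map in $\QA$; fix an approximate identity $\{e_\alpha\}_\alpha$ of $\A$ with $0\le e_\alpha\le 1$, and let $\pi_\S$, $\X_\S$ and $\varepsilon_\alpha:=e_\alpha+\N_\S$ be the objects produced there, so that $\omega(a)=\lim_\alpha\langle\pi_\S(a)\varepsilon_\alpha,\varepsilon_\alpha\rangle_{\X_\S}$ and $\omega(a^*a)=\S(a,a)=\lim_\alpha\langle\pi_\S(a)\varepsilon_\alpha,\pi_\S(a)\varepsilon_\alpha\rangle_{\X_\S}$ (the latter by Remark~\ref{ S bounded + appr id= repres}).

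Next, for each index $\alpha$ I would apply Lemma~\ref{lemma 1} to the positive sesquilinear $\C$-valued map $\langle\cdot,\cdot\rangle_{\X_\S}$ on $\X_\S\times\X_\S$, with the two vectors $\pi_\S(a)\varepsilon_\alpha$ and $\varepsilon_\alpha$, getting
\[
\|\langle\pi_\S(a)\varepsilon_\alpha,\varepsilon_\alpha\rangle_{\X_\S}\|_\C\le 2\,\|\langle\pi_\S(a)\varepsilon_\alpha,\pi_\S(a)\varepsilon_\alpha\rangle_{\X_\S}\|_\C^{1/2}\,\|\langle\varepsilon_\alpha,\varepsilon_\alpha\rangle_{\X_\S}\|_\C^{1/2}.
\]
Since $\langle\varepsilon_\alpha,\varepsilon_\alpha\rangle_{\X_\S}=\S(e_\alpha,e_\alpha)=\omega(e_\alpha^*e_\alpha)$ and $\|e_\alpha^*e_\alpha\|\le 1$, the last factor is bounded by $\|\omega\|^{1/2}$ uniformly in $\alpha$. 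Passing to the limit over $\alpha$ and using continuity of $\|\cdot\|_\C$: the left side tends to $\|\omega(a)\|_\C$, while taking $\limsup_\alpha$ of the right side (with the uniform bound on the last factor and $\|\langle\pi_\S(a)\varepsilon_\alpha,\pi_\S(a)\varepsilon_\alpha\rangle_{\X_\S}\|_\C\to\|\omega(a^*a)\|_\C$) yields $2\|\omega(a^*a)\|_\C^{1/2}\|\omega\|^{1/2}$. Hence $\|\omega(a)\|_\C^2\le 4\|\omega\|\,\|\omega(a^*a)\|_\C$, and in the commutative case Lemma~\ref{lemma 1} removes the factor $2$, giving $\|\omega(a)\|_\C^2\le\|\omega\|\,\|\omega(a^*a)\|_\C$.

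Finally, for the claimed equality I would note that $\omega$, being a positive linear map between $C^*$-algebras, is $*$-preserving: writing $a=h+ik$ with $h,k$ self-adjoint and each self-adjoint element as a difference of two positive elements, $\omega(h)$ and $\omega(k)$ are self-adjoint, so $\omega(a^*)=\omega(h)-i\omega(k)=\omega(a)^*$; since $\C$ is a $C^*$-algebra this gives $\|\omega(a^*)\|_\C=\|\omega(a)\|_\C$, hence $\|\omega(a)\|_\C^2=\|\omega(a^*)\|_\C\,\|\omega(a)\|_\C$. I expect the only point requiring a little care to be the limiting step: one cannot assume the right-hand net converges, so the per-index inequality must be pushed through $\limsup$, and the uniform bound $\|e_\alpha\|\le 1$ (rather than mere boundedness of $\{e_\alpha\}$) is what produces the constant $4$ on the nose. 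Everything else is a direct substitution into Corollary~\ref{cor: repres omega} and Lemma~\ref{lemma 1}.
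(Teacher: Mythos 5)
Your proof is correct and follows essentially the same route as the paper: represent $\omega$ via Corollary~\ref{cor: repres omega}, apply Lemma~\ref{lemma 1} to the pair $\pi_\S(a)\varepsilon_\alpha$, $\varepsilon_\alpha$, bound $\|\langle\varepsilon_\alpha,\varepsilon_\alpha\rangle_{\X_\S}\|_\C=\|\omega(e_\alpha^*e_\alpha)\|_\C\le\|\omega\|$ using the canonical approximate identity, and pass to the limit. The only (harmless) deviation is that you obtain $\|\omega(a^*)\|_\C=\|\omega(a)\|_\C$ from the elementary fact that a positive map between $C^*$-algebras is $*$-preserving, whereas the paper derives it from the adjoint property of the representation; your explicit use of $\limsup$ for the limiting step is, if anything, slightly more careful than the paper's.
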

\begin{proof}
By Corollary \ref{cor: repres omega}, for all $a\in\A$, \begin{equation*}    \|\omega(a^*a)\|_\C=\|\lim_\alpha\langle\pi_\S(a)\varepsilon_\alpha,\pi_\S(a)\varepsilon_\alpha\rangle_{\X_\S}\|_\C=\lim_\alpha\|\pi_\S(a)\varepsilon_\alpha\|_\S^2
\end{equation*}since $\pi_S$ is a $*$-representation and the norm is continuous, where $\varepsilon_\alpha:=e_\alpha+\N_\S$, for all $\alpha$, $\{e_\alpha\}_\alpha$ is an approximate identity for $\A$ and $\S(a,b)=\omega(b^*a)$, for all $a,b\in\A$.\\Similarly we have, for all $a\in\A$,
\begin{multline*}    \|\omega(a)\|_\C=\|\lim_\alpha\langle\pi_\S(a)\varepsilon_\alpha,\varepsilon_\alpha\rangle_{\X_\S}\|_{\C}=\|\lim_\alpha\langle\varepsilon_\alpha,\pi_\S(a^*)\varepsilon_\alpha\rangle_{\X_\S}\|_{\C}\\=\|\lim_\alpha\langle\pi_\S(a^*)\varepsilon_\alpha,\varepsilon_\alpha\rangle_{\X_\S}\|_\C=\|\omega(a^*)\|_\C.
\end{multline*}
By  Lemma \ref{lemma 1}, we get
$$4\|\varepsilon_\alpha\|_\S^2\|\pi_\S(a)\varepsilon_\alpha\|_\S^2\geq\|\ip{\pi_\S(a)\varepsilon_\alpha}{\varepsilon_\alpha}_{\X_\S}\|_\C^2, \quad\forall\alpha.$$ 
Moreover, by the construction in the proof of Corollary \ref{cor: repres omega}, we obtain  $\|\varepsilon_\alpha\|_\S^2=\|\omega(e_\alpha^*e_\alpha)\|_\C$. We may let $\{e_\alpha\}_\alpha$ be the canonical approximate identity for $\A$, i.e. $e_\alpha=e_\alpha^*$ and $\|e_\alpha\|\leq1$ and get $\|\varepsilon_\alpha\|_\S^2\leq\|\omega\|$ for all $\alpha$. Therefore we have:\begin{equation}\label{eq: almst}4\|\omega\|\,\|\pi_\S(a)\varepsilon_\alpha\|_\S^2\geq\|\ip{\pi_\S(a)\varepsilon_\alpha}{\varepsilon_\alpha}_{\X_\S}\|_\C^2, \quad\forall \alpha.\end{equation} By taking the limits on both sides of \eqref{eq: almst}, we obtain the desired inequality.\\If  $\C$ is commutative, then we get a better estimate of $\|\omega(a)\|_\C^2$:
$$\|\omega\|\,\|\omega(a^*a)\|_\C\geq\|\omega(a)\|_\C^2=\|\omega(a^*)\|_\C\,\|\omega(a)\|_\C, \quad\forall a\in\A.$$ 
\end{proof}

\begin{corollary}\label{cor:quasibanach} Let $(\A[\|\cdot\|],\A_0)$ be a normed quasi $*-$algebra with a strongly idempotent right approximate identity $\{e_\alpha\}_\alpha\subset\A_0$   and let $\omega$ be a bounded positive linear $\C$-valued map on $(\A,\A_0)$. \ If there exists $M>0$ such that \begin{equation}\label{eq: omega bounded}
    \|\omega(d^*c)\|_\C\leq M\|d\|\|c\|, \quad \forall c,d\in\A_0,
\end{equation} then there exist a quasi $B_\C$-space ${\X_\omega}$ with $\C-$valued quasi inner product  $\langle\cdot,\cdot\rangle_{\X_\omega}$, a dense subspace $\D_\omega\subset {\X_\omega}$ , a net $\{\varepsilon_\alpha\}_\alpha\subset\D_\omega$ and a closed $*$\hyp representation $\pi_\omega$ in { ${\mathcal L}\ad(\D_\omega,{\X_\omega})$} such that for all $a\in\A$ and every $\alpha$ the limit $\lim_\beta\langle \pi_\omega(a)\varepsilon_\alpha,\varepsilon_\beta\rangle_{{\X_\omega}}$ exists and 
$$\omega(a)=\lim_\alpha\lim_\beta\langle \pi_\omega(a)\varepsilon_\alpha,\varepsilon_\beta\rangle_{{\X_\omega}},\quad\forall a\in\A.$$
Moreover, $$\omega(c^*a)=\lim_\alpha\langle \pi_\omega(a)\varepsilon_\alpha,\pi_\omega(c)\varepsilon_\alpha\rangle_{{\X_\omega}},\quad\forall a\in\A, c\in\A_0.$$ In addition, $\pi_\omega(e_\alpha)\varepsilon_\beta=\varepsilon_\alpha$ whenever $\alpha\leq\beta$ and, for all $a\in\A$ and every $\alpha$ the limit  $$\lim_{\beta}\langle \pi_\omega(a)(\varepsilon_\beta-\varepsilon_\alpha),\pi_\omega(a)(\varepsilon_\beta-\varepsilon_\alpha)\rangle_{{\X_\omega}}$$ exists and we have
$\lim_{\alpha}\lim_{\beta}\langle \pi_\omega(a)(\varepsilon_\beta-\varepsilon_\alpha),\pi_\omega(a)(\varepsilon_\beta-\varepsilon_\alpha)\rangle_{{\X_\omega}}=0.$\\
Finally, given $\varepsilon_\alpha$, for each $a\in\A$ there exists a sequence $\{a_n\}_n\subset\A_0$ such that
$$\pi_\omega(a_n)\varepsilon_\alpha\to\pi_\omega(a)\varepsilon_\alpha\quad \textrm{in }{\X_\omega},\quad n\to\infty.$$
\end{corollary}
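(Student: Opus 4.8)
The plan is to reduce the statement to Theorem \ref{quasibanach} by passing to the sesquilinear map canonically attached to $\omega$, and then to recover $\omega$ itself from the right approximate identity.

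\emph{Step 1: the associated sesquilinear map.} I would set $\S_0(c,d):=\omega(d^*c)$ for $c,d\in\A_0$. By \eqref{eq: omega bounded} we have $\|\S_0(c,d)\|_\C\le M\|c\|\,\|d\|$, so $\S_0$ is $\|\cdot\|$-continuous on $\A_0\times\A_0$; since $\A_0$ is dense in $\A[\|\cdot\|]$ by {\sf (lc3)}, $\S_0$ extends uniquely to a bounded sesquilinear map $\S:\A\times\A\to\C$ with $\|\S(a,b)\|_\C\le M\|a\|\,\|b\|$. Passing to limits along sequences in $\A_0$: positivity of $\omega$ and closedness of $\C^+$ give $\S(a,a)\in\C^+$ (hence also $\S(b,a)=\S(a,b)^*$), while the invariance $\S(ax,y)=\S(x,a^*y)$ ($a\in\A$, $x,y\in\A_0$) holds on $\A_0\times\A_0$ by the associative laws and the $*$-algebra structure of $\A_0$, and then on all of $\A$ by continuity of $\S$ and of the module multiplications (the same continuity invoked in Remark \ref{ S bounded + appr id= repres}). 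Thus $\S\in\QA$ and is bounded; moreover $\S(a,c)=\omega(c^*a)$ for all $a\in\A$, $c\in\A_0$, since both sides are $\|\cdot\|$-continuous in $a$ and agree on $\A_0$.

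\emph{Step 2: invoke the representation theorem.} Boundedness of $\S$ gives $\|\S(a-ae_\alpha,a-ae_\alpha)\|_\C\le M\|a-ae_\alpha\|^2\to0$, so $\{e_\alpha\}$ is a right approximate identity with respect to $\S$ (Definition \ref{defn: rai}); together with the density of $\A_0$ this is exactly condition $(i)$ of Theorem \ref{quasibanach} (cf.\ Remark \ref{ S bounded + appr id= repres}), and since in addition $e_\alpha e_\beta=e_\alpha$ for $\alpha\le\beta$, Theorem \ref{quasibanach} applies and produces $\X_\omega:=\X_\S$, $\D_\omega:=\D_\S$, the net $\varepsilon_\alpha:=e_\alpha+\N_\S$ and a closed $*$-representation $\pi_\omega:=\pi_\S$ with $\S(a,b)=\lim_\alpha\langle\pi_\omega(a)\varepsilon_\alpha,\pi_\omega(b)\varepsilon_\alpha\rangle_{\X_\omega}$, with $\pi_\omega(e_\alpha)\varepsilon_\beta=\varepsilon_\alpha$ for $\alpha\le\beta$, with the existence of $\lim_\beta\langle\pi_\omega(a)(\varepsilon_\beta-\varepsilon_\alpha),\pi_\omega(a)(\varepsilon_\beta-\varepsilon_\alpha)\rangle_{\X_\omega}$ and the vanishing of its iterated limit, and with $\pi_\omega(a_n)\varepsilon_\alpha\to\pi_\omega(a)\varepsilon_\alpha$ for a sequence $\{a_n\}\subset\A_0$ depending on $a$ and $\alpha$. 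Inserting $\S(a,c)=\omega(c^*a)$ into the first formula already gives $\omega(c^*a)=\lim_\alpha\langle\pi_\omega(a)\varepsilon_\alpha,\pi_\omega(c)\varepsilon_\alpha\rangle_{\X_\omega}$, and all the remaining ``moreover/in addition/finally'' clauses are read off verbatim; only the displayed formula for $\omega(a)$ itself still needs proof.

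\emph{Step 3: the double limit, and the main difficulty.} For $\alpha\le\beta$, using $\varepsilon_\alpha=\pi_\omega(e_\alpha)\varepsilon_\beta=e_\alpha e_\beta+\N_\S=e_\alpha+\N_\S$ and the action $\pi_\omega(a)(x+\N_\S)=ax+\N_\S$ ($a\in\A$, $x\in\A_0$) underlying the GNS construction of Theorem \ref{quasibanach}, I obtain
$$\langle\pi_\omega(a)\varepsilon_\alpha,\varepsilon_\beta\rangle_{\X_\omega}=\langle ae_\alpha+\N_\S,\;e_\beta+\N_\S\rangle_{\X_\omega}=\S(ae_\alpha,e_\beta)=\omega(e_\beta^*ae_\alpha).$$
The crucial observation is that, the involution being isometric by {\sf (bq*)}, the net $\{e_\beta^*\}$ is a \emph{left} approximate identity with respect to $\|\cdot\|$: indeed $\|c-e_\beta^*c\|=\|c^*-c^*e_\beta\|\to0$ for every $c\in\A$. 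Hence $e_\beta^*(ae_\alpha)\to ae_\alpha$ in $\A$, so, $\omega$ being bounded and therefore continuous, $\lim_\beta\langle\pi_\omega(a)\varepsilon_\alpha,\varepsilon_\beta\rangle_{\X_\omega}=\omega(ae_\alpha)$ — in particular the inner limit exists; finally $ae_\alpha\to a$ yields $\lim_\alpha\lim_\beta\langle\pi_\omega(a)\varepsilon_\alpha,\varepsilon_\beta\rangle_{\X_\omega}=\lim_\alpha\omega(ae_\alpha)=\omega(a)$. I expect this last step to be the only real obstacle: identifying $\langle\pi_\omega(a)\varepsilon_\alpha,\varepsilon_\beta\rangle_{\X_\omega}$ with $\omega(e_\beta^*ae_\alpha)$ for $\alpha\le\beta$ (this is where strong idempotency, i.e.\ $e_\alpha e_\beta=e_\alpha$, is indispensable) and recognizing that $\{e_\beta^*\}$ is a left approximate identity, which is precisely what makes the iterated limit collapse onto $\omega$; the rest is density, continuity and a direct appeal to Theorem \ref{quasibanach}.
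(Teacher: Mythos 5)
Your proposal is correct and follows essentially the same route as the paper's proof: extend $\phi_0(c,d)=\omega(d^*c)$ by continuity to a bounded invariant positive sesquilinear map, invoke Theorem \ref{quasibanach} via Remark \ref{ S bounded + appr id= repres}, and recover $\omega(a)$ as $\lim_\alpha\omega(ae_\alpha)$. The only (harmless) variation is in the inner $\beta$-limit, where you identify $\langle\pi_\omega(a)\varepsilon_\alpha,\varepsilon_\beta\rangle_{\X_\omega}$ with $\omega(e_\beta^*ae_\alpha)$ and use that $\{e_\beta^*\}_\beta$ is a left approximate identity by {\sf (bq*)} --- the same device the paper employs in part 2) of Corollary \ref{moreGNS alg} --- whereas the paper's own proof of this corollary instead writes $\omega(ae_\alpha)=\phi(e_\alpha,a^*)$ and passes to $\lim_\beta\langle\pi_\phi(a)\varepsilon_\alpha,\varepsilon_\beta\rangle_{\X_\omega}$ through the adjoint relation.
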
\begin{proof}
Similarly to the proof of \cite[Corollary 3.13]{GBSICT}, we consider the bounded invariant positive sesquilinear $\C$-valued  map $$\phi_0:(c,d)\in\A_0\times\A_0\to\phi_0(c,d)=\omega(d^*c)\in\C$$ which extends, by continuity, to a  bounded  invariant positive sesquilinear $\C$-valued map $\phi$ on $\A\times\A$. For each $e_\alpha$ and $a\in\A$, if $\{c_n\}_n\subset\A_0$ is such that $c_n\to a$ as $n\to\infty$, then $c_n e_\alpha\to a e_\alpha$ 
because the right multiplication operator $R_{e_\alpha}$ is continuous since $(\A,\A_0)$ is a normed quasi *-algebra. Hence $$\omega(ae_\alpha)=\lim_{n\to \infty}\omega(c_ne_\alpha)=\lim_{n\to \infty}\phi_0(e_\alpha,c_n^*)=\phi(e_\alpha,a^*)$$ due to the fact that also $c_n^*\to a^*$ as $n\to\infty$.  By Remark \ref{ S bounded + appr id= repres} applied to $\phi$, we have that \begin{multline*}    \omega(a)=\lim_{\alpha}\omega(ae_\alpha)=\lim_{\alpha}\phi(e_\alpha, a^*)=\lim_{\alpha}\lim_{\beta}\ip{\pi_\phi(e_\alpha)\varepsilon_\beta}{\pi_\phi(a^*)\varepsilon_\beta}_{\X_\phi}\\=\lim_{\alpha}\lim_{\beta}\ip{\varepsilon_\alpha}{\pi_\phi(a^*)\varepsilon_\beta}_{\X_\phi}=\lim_{\alpha}\lim_{\beta}\ip{\pi_\phi(a)\varepsilon_\alpha}{\varepsilon_\beta}_{\X_\phi},
\end{multline*} once $\varepsilon_\alpha=e_\alpha+\N_\phi$ for all $\alpha$ and being $\pi_\phi(e_\alpha)\varepsilon_\beta=\varepsilon_\alpha$, for all $\alpha\leq\beta$.  Moreover, for all $a\in\A$ and $c\in\A_0$, we obtain that \begin{multline*}    \omega(c^*a)=\lim_{n\to \infty}\omega(c^*c_n)=\lim_{n\to\infty}\phi_0(c_n,c)=\phi(a,c)=\lim_{\alpha}\ip{\pi_\phi(a)\varepsilon_\alpha}{\pi_\phi(c)\varepsilon_\alpha}_{\X_\phi}.
\end{multline*}The rest of the thesis is granted by Remark \ref{ S bounded + appr id= repres}.
\end{proof}

\begin{remark}\label{rem: 3.12}
    The class of positive maps $\omega$ on a normed quasi *-algebra $(\A[\|\cdot\|],\A_0)$ which satisfy the assumptions of Corollary \ref{cor:quasibanach} is a proper generalization of the class of bounded positive linear $\C$-valued maps on C*-algebras. Indeed, if $\A=\A_0$ is a C*-algebra and  $\omega$ is a bounded positive linear map $\omega:\A\to\C$, then, for all $c,d\in\A=\A_0$ we have that $$\|\omega(d^*c)\|_\C\leq\|\omega\|\|d^*c\|\leq\|\omega\|\|c\|\|d\|.$$Further, observe that by the triangle inequality applied to the norm $\|\cdot\|_\C$, it follows that if $\omega_1$ and $\omega_2$ satisfy the assumptions of Corollary \ref{cor:quasibanach}, then $\omega_1+\omega_2$ also satisfies the assumption of  Corollary \ref{cor:quasibanach} and, if $\omega_1-\omega_2$ is positive, then  $\omega_1-\omega_2$ satisfies the assumption of  Corollary \ref{cor:quasibanach} as well.\end{remark}

Let  $R_{a}$ denote the right multiplier by $a\in\A$.
\begin{corollary}\label{moreGNS alg}  Let $\A$ be a normed *--algebra 
 and let $\omega$ be a bounded positive  linear $\C$-valued map on $\A$.   Then  the following statements hold.
\\ 

\noindent 1) there
exist a quasi $B_\C$-space $\X_\omega$ whose quasi norm is induced by a $\C$-valued quasi inner product $\ip{\cdot}{\cdot}_{\X}$, a dense subspace $\D_\omega\subseteq\X$ and a closed  *--representation $\pi_\omega$ of $\A$ with domain $\D_\omega$, such that $$\omega(b^*ac)=\ip{\pi_\omega(a) (c+N_\omega))}{b+N_\omega}_{\X_\omega},\quad\forall a,b,c\in\A$$ where $N_\omega=\{a\in\A_0|\, \omega(a^*a)=0\}$.\\ 

     \noindent 2) If $\A$ possesses a right approximate identity $\{e_\alpha\}_\alpha$, there exists a net $\{\varepsilon_\alpha\}_\alpha\subset\X_\omega$  such that for all $a\in\A$ and every $\alpha$ the limit  $\lim_{\beta}\langle \pi_\omega(a)\varepsilon_\alpha,\varepsilon_\beta\rangle_{{\X_\omega}}$ exists and we have
$$\omega(a)=\lim_{\alpha}\lim_{\beta}\langle \pi_\omega(a)\varepsilon_\alpha,\varepsilon_\beta\rangle_{{\X_\omega}}.$$ 

\noindent 3) If, in addition, there exists $M>0$ such that $\|R_{e_\alpha}\|\leq M$ for all $\alpha$, then for all $\alpha$ and $a,b\in\A$ $$\omega(b^*a)=\lim_{\alpha}\langle \pi_\omega(a)\varepsilon_\alpha,\pi_\omega(b)\varepsilon_\alpha\rangle_{{\X_\omega}}.$$ \end{corollary}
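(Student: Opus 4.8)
The plan is to view $\A$ as the non-unital quasi *-algebra $(\A,\A)$, i.e.\ to take $\A_0=\A$, and to attach to $\omega$ the $\C$-valued map $\S(a,b):=\omega(b^*a)$. As in the proof of Corollary~\ref{cor: repres omega}, one checks readily that $\S$ is positive sesquilinear, that it is invariant ($\S(ax,y)=\omega(y^*ax)=\S(x,a^*y)$), and that it is bounded ($\|\S(a,b)\|_\C\le\|\omega\|\,\|a\|\,\|b\|$); moreover $N_\omega=\{a\in\A:\omega(a^*a)=0\}$ is exactly the null space of $\S$. For statement~1) I would then run the GNS-type construction of \cite[Theorem~3.2]{GBSICT} on $\S$: its hypothesis — density of $\A_0/N_\omega$ in $\A/N_\omega$ — holds trivially because $\A_0=\A$, so it delivers a quasi $B_\C$-space $\X_\omega$, a dense subspace $\D_\omega\supseteq\A/N_\omega$, and a closed *-representation $\pi_\omega$ acting by left multiplication, $\pi_\omega(a)(c+N_\omega)=ac+N_\omega$. (That $\pi_\omega(a)$ is well defined is where invariance is needed: if $\omega(c^*c)=0$ then, by invariance and Lemma~\ref{lemma 1}, $\|\S(ac,ac)\|_\C=\|\S(c,a^*ac)\|_\C\le 2\|\S(c,c)\|_\C^{1/2}\|\S(a^*ac,a^*ac)\|_\C^{1/2}=0$, so $N_\omega$ is a left ideal.) Statement~1) is now immediate from the definition of the $\C$-valued inner product on $\A/N_\omega$, namely $\langle \pi_\omega(a)(c+N_\omega),b+N_\omega\rangle_{\X_\omega}=\S(ac,b)=\omega(b^*ac)$; this is the non-unital analogue of \cite[Corollary~3.10]{GBSICT}, with the cyclic vector absent.

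For statement~2), given a right approximate identity $\{e_\alpha\}_\alpha\subset\A$ (so that $\|a-ae_\alpha\|\to0$ for all $a\in\A$), I would set $\varepsilon_\alpha:=e_\alpha+N_\omega\in\D_\omega$ and compute the inner limit by hand. Using 1) and the Hermitian symmetry $\S(c,d)=\S(d,c)^*$,
$$\langle \pi_\omega(a)\varepsilon_\alpha,\varepsilon_\beta\rangle_{\X_\omega}=\S(ae_\alpha,e_\beta)=\bigl(\omega(e_\alpha^*a^*e_\beta)\bigr)^{*}.$$
Since $(e_\alpha^*a^*)e_\beta\to e_\alpha^*a^*$ in $\A$ and $\omega$ is continuous, the limit over $\beta$ exists and equals $\bigl(\omega(e_\alpha^*a^*)\bigr)^{*}=\S(a^*,e_\alpha)^{*}=\S(e_\alpha,a^*)=\omega(ae_\alpha)$. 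Passing to the limit over $\alpha$ and using $ae_\alpha\to a$ with continuity of $\omega$ yields $\lim_\alpha\lim_\beta\langle \pi_\omega(a)\varepsilon_\alpha,\varepsilon_\beta\rangle_{\X_\omega}=\lim_\alpha\omega(ae_\alpha)=\omega(a)$, which is 2).

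For statement~3), under the extra assumption $\|R_{e_\alpha}\|\le M$ for all $\alpha$, I would use 1) to write $\langle \pi_\omega(a)\varepsilon_\alpha,\pi_\omega(b)\varepsilon_\alpha\rangle_{\X_\omega}=\S(ae_\alpha,be_\alpha)$ and then estimate, via Lemma~\ref{lemma 1},
$$\|\S(ae_\alpha,be_\alpha)-\S(a,b)\|_\C\le\|\S(ae_\alpha-a,be_\alpha)\|_\C+\|\S(a,be_\alpha-b)\|_\C .$$
The second summand is at most $2\|\omega\|\,\|a\|\,\|be_\alpha-b\|$, which tends to $0$. In the first summand, $\|\S(be_\alpha,be_\alpha)\|_\C\le\|\omega\|\,\|be_\alpha\|^2\le M^2\|\omega\|\,\|b\|^2$ is uniformly bounded, while $\|\S(ae_\alpha-a,ae_\alpha-a)\|_\C\le\|\omega\|\,\|ae_\alpha-a\|^2\to0$, so it also tends to $0$. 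Hence $\lim_\alpha\langle \pi_\omega(a)\varepsilon_\alpha,\pi_\omega(b)\varepsilon_\alpha\rangle_{\X_\omega}=\S(a,b)=\omega(b^*a)$.

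The step I expect to require the most care is the \emph{existence} (not merely boundedness) of the inner limit in 2): read naively, $\langle \pi_\omega(a)\varepsilon_\alpha,\varepsilon_\beta\rangle_{\X_\omega}=\omega(e_\beta^*ae_\alpha)$ places $e_\beta^*$ on the \emph{left}, where a right approximate identity provides no control. Transposing through the Hermitian symmetry of $\S$ moves $e_\beta$ to the right, where $xe_\beta\to x$ does apply, and this is what forces the double limit to converge and collapse — first to $\omega(ae_\alpha)$, then to $\omega(a)$. Everything else is standard GNS bookkeeping specialized to $\A_0=\A$, together with the Cauchy--Schwarz estimate of Lemma~\ref{lemma 1} and the continuity of $\omega$.
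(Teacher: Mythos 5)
Your argument is correct, and for parts 1) and 2) it is essentially the paper's proof: both reduce to the GNS construction applied to $\S(a,b)=\omega(b^*a)$ with $\A_0=\A$, set $\varepsilon_\alpha=e_\alpha+N_\omega$, and handle the inner limit in 2) by transposing $e_\beta$ from the left to the right through the involution (you do this via the Hermitian symmetry $\S(x,y)=\S(y,x)^*$ of the form, the paper via the isometry $\|c^*\|=\|c\|$ in $\A$ applied to $e_\alpha^*a^*-e_\alpha^*a^*e_\beta$; these are the same trick). Part 3) is where you genuinely diverge. The paper stays inside $\A$: it shows $e_\alpha^*b^*ae_\alpha\to b^*a$ in norm, using $\|b^*ae_\alpha-e_\alpha^*b^*ae_\alpha\|\le\|R_{e_\alpha}\|\,\|b^*a-e_\alpha^*b^*a\|$ together with the involution trick, and then applies continuity of $\omega$. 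You instead estimate $\|\S(ae_\alpha,be_\alpha)-\S(a,b)\|_\C$ directly with the generalized Cauchy--Schwarz inequality of Lemma \ref{lemma 1}, which only requires a bound on $\|\S(be_\alpha,be_\alpha)\|_\C$. Your route is slightly more economical: it makes transparent that the hypothesis $\|R_{e_\alpha}\|\le M$ enters only to keep $\|be_\alpha\|$ under control, and since $be_\alpha\to b$ already forces $\|be_\alpha\|$ to be eventually bounded along the net, your argument in fact yields 3) without the uniform bound on the right multipliers; the paper's argument needs that bound essentially, because there the varying factor $e_\alpha$ multiplies a quantity that is only tending to zero. Both proofs are sound; yours trades a norm-convergence statement in $\A$ for a quadratic estimate on the form.
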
\begin{proof}
    1) The proof is the same of \cite[Corollary 3.10]{GBSICT} that, in fact, does not require the existence of the unit in $\A$. \\ 
    2) Observe that, since $\A$ is a normed *-algebra, for any fixed $\alpha$ and $a\in\A$, we have that \begin{equation*}
        \|ae_\alpha-e_\beta^*ae_\alpha\|= \|ae_\alpha-(ae_\alpha)^*e_\beta\|,\quad \forall \beta.
    \end{equation*}  Hence, $\lim_\beta\|ae_\alpha-e_\beta^*ae_\alpha\|=0$, thus by the boundedness of $\omega$ we get $$\lim_\beta\omega(e_\beta^*ae_\alpha)=\omega(ae_\alpha).$$ Set  $\varepsilon_\alpha= e_\alpha+\N_\S$ for each $\alpha$, where $\S:\A\times\A\to\C$ is the sesquilinear map defined by $\S(a,b)=\omega(b^*a)$ for all $a,b\in\A$ and $\N_\S$ is the space defined  as in \eqref{eq: NS}. Then by 1), we have $$\lim_{\beta}\omega(e_\beta^*a e_\alpha)=\lim_{\beta}\langle \pi_\omega(a)\varepsilon_\alpha,\varepsilon_\beta\rangle_{{\X_\omega}}=\omega(ae_\alpha).$$ Therefore, $$\omega(a)=\lim_\alpha \omega(ae_\alpha)=\lim_\alpha \lim_{\beta}\langle \pi_\omega(a)\varepsilon_\alpha,\varepsilon_\beta\rangle_{{\X_\omega}}.$$
3) If $\|R_{e_\alpha}\|\leq M $ for all $\alpha$, given $a,b\in\A$ we have \begin{align*}
    \|b^*a-e_\alpha^*b^*ae_\alpha\|&\leq \|b^*a-b^*ae_\alpha\| +\|b^*ae_\alpha-e_\alpha^*b^*ae_\alpha\|\\&\leq \|b^*a-b^*ae_\alpha\| +\|b^*a-e_\alpha^*b^*a\|\|R_{e_\alpha}\|\\&\leq \|b^*a-b^*ae_\alpha\| + M \|a^*b-a^*be_\alpha\|, \quad \forall \alpha
\end{align*}since $\|a^*b-a^*be_\alpha\|=\|b^*a-e_\alpha^*b^*a\|$ because $\A$ is a normed *-algebra. It follows that$$\lim_{\beta}e_\alpha^*b^*ae_\alpha=b^*a, \quad \forall a,b\in\A.$$ Since $\omega$ is bounded, we deduce that \begin{align*}
    \omega(b^*a)&=\lim_\alpha \omega(e_\alpha^*b^*ae_\alpha)=\lim_\alpha\ip{\pi_\omega(a) (e_\alpha+N_\omega)}{be_\alpha+N_\omega}_{\X_\omega}\\&=\lim_\alpha\ip{\pi_\omega(a) \varepsilon_\alpha}{\pi_\omega(b)\varepsilon_\alpha}_{\X_\omega},\quad\forall a,b\in\A.
\end{align*}
\end{proof}

\section{Applications and examples}\label{sec: appl and ex} The aim of this section is to illustrate that examples of non-unital normed quasi *-algebras with strongly idempotent right approximate identity  and  non-trivial positive linear  C*-valued maps satisfying the assumption of Corollary \ref{cor:quasibanach} do exist.  Here we will consider for instance noncommutative generalizations of integral operators.
\bigskip

Let $C(\Omega)$ be the C*-algebra of all continuous functions on a compact Hausdorff space $\Omega$ equipped with the sup norm.
\begin{corollary}
	Let $\A=\ell^2(C(\Omega))$. Then $ \A $ is a normed non-unital $*-$algebra, equipped with the component-wise involution and multiplication. Moreover, it has a strongly idempotent right approximate identity. 
\end{corollary}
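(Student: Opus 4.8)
The plan is to make the definition of $\A=\ell^2(C(\Omega))$ explicit, check that the component-wise operations turn it into a normed $*$-algebra, observe that it fails to be unital, and then exhibit a concrete strongly idempotent right approximate identity. Recall that $\ell^2(C(\Omega))$ consists of all sequences $a=(a_n)_{n\in\NN}$ with $a_n\in C(\Omega)$ for which $\sum_n a_n^*a_n$ converges in $C(\Omega)$, equipped with the norm $\|a\|:=\|\sum_n a_n^*a_n\|_{C(\Omega)}^{1/2}$, the component-wise involution $a^*=(a_n^*)_n$ and the component-wise product $ab=(a_nb_n)_n$. First I would verify that these operations are well defined and make $\A$ a normed $*$-algebra. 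The only point that is not bookkeeping is closure under multiplication: for $a,b\in\A$ one has, pointwise on $\Omega$ and hence as positive elements of $C(\Omega)$, $(a_nb_n)^*(a_nb_n)=|a_n|^2|b_n|^2\le \|a_n\|_\infty^2\,b_n^*b_n\le \|a\|^2\,b_n^*b_n$, where $\|a_n\|_\infty^2=\|a_n^*a_n\|_{C(\Omega)}\le\|\sum_k a_k^*a_k\|_{C(\Omega)}=\|a\|^2$; since in a C*-algebra $0\le u\le v$ implies $\|u\|\le\|v\|$ and $\sum_n b_n^*b_n$ converges, the partial sums of $\sum_n (a_nb_n)^*(a_nb_n)$ are Cauchy, hence convergent in $C(\Omega)$ by completeness, and the same estimate gives $\|ab\|\le\|a\|\|b\|$. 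Because $C(\Omega)$ is commutative, $a_na_n^*=a_n^*a_n$, so $\|a^*\|=\|a\|$, and the remaining $*$-algebra axioms are immediate componentwise.

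Next I would note that $\A$ is non-unital: a unit $e=(e_n)_n$ would have to satisfy $e_na_n=a_n$ for every $a\in\A$, and testing against sequences supported in a single coordinate forces $e_n=\id_{C(\Omega)}$ for all $n$, whereas $(\id,\id,\id,\dots)\notin\A$ since $\sum_n\id$ does not converge in $C(\Omega)$.

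Finally, for the strongly idempotent right approximate identity I would take the index set $\NN$ with its usual order and set $e_n:=(\underbrace{\id,\dots,\id}_{n},\,0,0,\dots)\in\A_0=\A$, which lies in $\A$ because the defining series is finite. For $n\le m$ the component-wise product satisfies $(e_ne_m)_k=\id$ when $k\le n$ and $(e_ne_m)_k=0$ otherwise, that is $e_ne_m=e_n$, so the net is strongly idempotent. For any $a=(a_k)_k\in\A$ one has $a-ae_n=(0,\dots,0,a_{n+1},a_{n+2},\dots)$, so $\|a-ae_n\|^2=\big\|\sum_{k>n}a_k^*a_k\big\|_{C(\Omega)}\to 0$ as $n\to\infty$, being the tail of a convergent series; hence $\{e_n\}_n$ is a right approximate identity for $(\A,\A_0)$ with respect to the norm. (One also sees $\|ae_n\|\le\|a\|$, so the associated right multipliers are uniformly bounded, which is what is needed to apply Corollary~\ref{cor:quasibanach} and Corollary~\ref{moreGNS alg} to this algebra.) There is no serious obstacle in this proof; the single step that requires a genuine argument rather than unwinding definitions is the stability of $\ell^2(C(\Omega))$ under component-wise multiplication, which is precisely the comparison estimate recorded in the first paragraph.
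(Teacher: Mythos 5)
Your proof is correct and follows the same route as the paper: the same truncated sequences $e_n=(1,\dots,1,0,\dots)$ serve as the strongly idempotent right approximate identity, with $\|a-ae_n\|^2=\bigl\|\sum_{k>n}a_k^*a_k\bigr\|_{C(\Omega)}\to0$ as the tail of a convergent series. The paper dismisses the normed $*$-algebra verification as immediate, whereas you supply the closure-under-multiplication estimate and the non-unitality argument explicitly; these details are accurate and only elaborate on what the paper leaves implicit.
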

\begin{proof}
	The fact that $(\A,\A_0)$ is a normed $*-$ algebra follows directly. For each $m\in\NN$, denote by 
	$$\widetilde{e}_m:=\{\underbrace{1,\ldots,1}_{m-\textrm{times}},0,0,\ldots\}.$$ We have that  $\widetilde{e}_m \widetilde{e}_n=\widetilde{e}_m$ whenever $m\leq n$.   Further, for all $f\in\A$, we have $\|f-f\widetilde{e}_m\|_2\to0$ as $m\to\infty$. 
\end{proof}

 In particular,  statement 3) in Corollary \ref{moreGNS alg} applies to the case of  $ \ell^2(C(\Omega))$, since it is a normed *-algebra whose right approximate identity $\{\widetilde{e}_m\}_m$ is such that $\|R_{\widetilde{e}_m}\|\leq1$ for all $m$.

\begin{example}Let $\{w_n\}_n\subseteq C(\Omega)$ be a uniformly bounded sequence of positive functions  i.e. $|w_n(x)|\leq M$ $\forall x\in\Omega$,  $\forall n\in\NN$ for some $M>0$ and $\S:\ell^2(\Omega)\times\ell^2(\Omega)\to C(\Omega)$ be given by $$\S((f_1,f_2,...), (g_1,g_2,...))=\sum_{n=1}^\infty w_nf_n\overline{g_n}, \quad (f_1,f_2,...),(g_1,g_2,...)\in \ell^2(\Omega).$$
	It is straightforward to check that $\S$ is a bounded invariant positive sesquilinear  $C(\Omega)$-valued map on $\ell^2(\Omega)\times\ell^2(\Omega)$. Then it is *-representable.\\
 Now, let $\{w_n\}\in\ell_2(C(\Omega))$ with $w_n\geq0$ for every $n\in\NN$. Since $w_j\leq\left(\sum_{n=1}^\infty w_n^2\right)^{\frac{1}{2}}$, it is $\|w_j\|_\infty\leq\|\{w_n\}_n\|_2$,  for all $j\in\NN$. Define $$\omega:\ell_2(C(\Omega))\to C(\Omega)$$ by $$\omega(f_1,f_2,...)=\sum_{n=1}^\infty w_nf_n, \quad\forall (f_1,f_2,...)\in\ell_2(C(\Omega)).$$ Then, since $\left\|\sum_{n=1}^\infty w_nf_n\right\|_\infty\leq\|\{w_n\}_n\|_2\, \|\{f_n\}_n\|_2$, $\omega$ is  a bounded positive linear  $C(\Omega)$-valued map. Moreover, for every $ \{f_n\}_n,\{g_n\}_n\in \ell_2(C(\Omega))$, we have that $$ \|\omega(\overline{g_1}f_1, \overline{g_2}f_2, ...)\|_\infty\leq\|\{w_n\}_n\|_2\,\|\{f_n\}_n\|_2\,\|\{g_n\}_n\|_2,$$ hence $\omega$ satisfies the assumptions of the Corollary \ref{cor:quasibanach}.
\end{example}

\begin{remark}
	Consider the normed non-unital quasi $*-$algebra $(L^2(\RR),L^\infty_c(\RR))$, with $L^\infty_c(\RR)$ the $*-$algebra of all bounded measurable functions in $\RR$ with compact support. It is not hard to see that, if $e_n=\chi_{[-n,n]}$, for each $n\in\NN$, i.e. the characteristic function of the interval $[-n,n]$, then 
	$e_m e_n=e_m$ whenever $m\leq n$. Further, the sequence $\{e_n\}_n$ is a strongly idempotent approximate identity for $(L^2(\RR),L^\infty_c(\RR))$. 
\end{remark}

\subsection{Integral Operators}
In order to build our way up to the abstract setting, we start this subsection with the following elementary example.  
\begin{example}\label{l2k} Let $L^\infty_c(\RR)$ be as before and consider the quasi $*-$algebra $(L^2(\RR),L^\infty_c(\RR))$. For all $n\in\NN$, set $e_n:=\chi_{[-n,n]}$ and $\S: L^2(\RR)\times L^2(\RR)\to\CC$, defined as $$\S(f,g):=\int_{\RR}f(t)\overline{g(t)}v(t)dt,$$
	where $v$ is some bounded non-negative measurable function on $\RR$. It is not hard to see that $\S$ satisfies $(i)$ in Theorem \ref{quasibanach}, hence it is *-representable.
\end{example}

More generally, let $C_b(\Omega)$ be the space of bounded continuous functions on $\Omega$, $k\in C_b(\RR^2)$ and let $\S_k:L^2(\RR)\times L^2(\RR)\to  C_b(\RR)$, be given by 
$$\S_k(f,g)(x):=\int_{\RR}k(x,t)f(t)\overline{g(t)}dt,$$
for $x\in\RR$, and $f,g\in L^2(\RR)$.  Then $\S_k$ satisfies $(i)$ of Theorem \ref{quasibanach}.  Indeed, it is not hard to check by some calculations that the image of $\S_k$ is in fact a subset of $C_b(\RR)$. Moreover, if $w\in L^2(\RR)\cap  L^\infty(\RR)$ and $w\geq0$, then the map $\theta:L^2(\RR)\to C_b(\RR) $  defined by $$\theta(f)(x)=\int_{\RR}k(x,t)w(t)f(t)dt,\quad f\in L^2(\RR), x\in\RR,$$ is a well-defined, bounded positive linear map on $L^2(\RR)$ satisfying the conditions of Corollary \ref{cor:quasibanach}.\\ 

This can be even more generalized, as we will see in what follows. Before introducing the result, we have to establish the notation. Denote by $L^2(\RR; \C)$ the Banach space of square integrable $\C-$valued functions, where the integral is regarded as the Bochner integral with respect to the Lebesgue measure $\mu$ on $\RR$:
$$f\in L^2(\RR,\C)\Leftrightarrow \int_\RR \|f(t)\|_{\C}^2\,d\mu(t)<\infty,$$
where we equate the functions which are equal $\mu-$almost everywhere; see \cite{DU}, \cite{DS}, or \cite{JM} for details on Bochner integrals. Denote by $\B(\C)$ the unital Banach algebra of bounded linear operators over $\C$, and by $C_b(\RR^2;\B(\C))$ and $C_b(\RR^2;\C)$ the spaces of the uniformly bounded mappings respectively from $\RR^2$ to  $\B(\C)$ and from $\RR^2$ to  $\C$.

\begin{corollary}	Let $K\in C_b(\RR^2;\B(\C))$ and define the sesquilinear form $\S_K: L^2(\RR;\C)\times L^2(\RR;\C)\to C_b(\RR;\C)$ by
	\begin{equation}\label{SK}\S_K(f,g)(x):=\int_\RR K(x,t) f(t)g^*(t)d\mu(t),\quad f,g\in L^2(\RR; \C).\end{equation} Then the following statements hold.
	\begin{itemize}
		\item[1)] For every $K\in C_b(\RR^2;\B(\C))$ and for every  $f,g\in L^2(\RR;\C)$, the mapping $\S_K(f,g):\RR\to \C$ is bounded and  continuous. Moreover, if $K$ is differentiable in the first coordinate, with the bounded and continuous partial derivative $\partial_x K(x,t)\in C_b(\RR^2;\B(\C))$, then the function $x\mapsto S_K(f,g)(x)$ is differentiable in $x$, and 
		\begin{equation}\label{SK'}\left(\S_K(f,g)\right)'(x)=\int_\RR \,\partial_x K(x,t) f(t)g^*(t)d\mu(t).\end{equation}
		\item[2)] If $K\in C_b(\RR^2;\B(\C))$ is such  that $K(x,t)ff^*\in\C^+$, for all $x,t\in\RR$, whenever $f$ is a fixed element in $L^2(\RR;\C)$, then, $\S_K$ is *\hyp representable. 
	\end{itemize}
\end{corollary}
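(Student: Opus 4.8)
The plan is to handle the two assertions separately, both by dominated convergence applied to the Bochner integral in \eqref{SK}.

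For 1), I would first record the pointwise bound $\|K(x,t)f(t)g^*(t)\|_\C\le\|K\|_\infty\,\|f(t)\|_\C\,\|g(t)\|_\C$; since $\|f(\cdot)\|_\C\|g(\cdot)\|_\C\in L^1(\RR)$ by Cauchy--Schwarz, the integrand is Bochner integrable and $\|\S_K(f,g)(x)\|_\C\le\|K\|_\infty\|f\|_2\|g\|_2$ for every $x$, which already gives boundedness of $\S_K(f,g)$. For continuity at a point $x_0$, write $\S_K(f,g)(x)-\S_K(f,g)(x_0)=\int_\RR\bigl(K(x,t)-K(x_0,t)\bigr)f(t)g^*(t)\,d\mu(t)$; the integrand is dominated by $2\|K\|_\infty\|f(t)\|_\C\|g(t)\|_\C\in L^1(\RR)$ and tends to $0$ pointwise in $t$ because $K$ is (jointly) continuous, so dominated convergence gives $\S_K(f,g)(x)\to\S_K(f,g)(x_0)$. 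For the differentiability claim, the point is to differentiate under the integral sign: by the mean value \emph{inequality} applied to $K(\cdot,t)$ on the segment joining $x_0$ and $x$, one has $\bigl\|\tfrac{K(x,t)-K(x_0,t)}{x-x_0}f(t)g^*(t)\bigr\|_\C\le\|\partial_x K\|_\infty\|f(t)\|_\C\|g(t)\|_\C\in L^1(\RR)$, while the difference quotients converge pointwise to $\partial_x K(x_0,t)f(t)g^*(t)$; dominated convergence then yields \eqref{SK'}, and the right-hand side of \eqref{SK'} is itself bounded and continuous in $x$ by the first part of 1) applied to $\partial_x K\in C_b(\RR^2;\B(\C))$.

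For 2), the strategy is to verify that $\S_K$ is a bounded element of $\QA$ and then invoke Remark \ref{ S bounded + appr id= repres} (equivalently, check hypothesis $(i)$ of Theorem \ref{quasibanach}). Sesquilinearity of $\S_K$ with the correct conjugation convention is immediate from \eqref{SK}; boundedness, $\|\S_K(f,g)\|_\infty\le\|K\|_\infty\|f\|_2\|g\|_2$, is part 1); the invariance (I) is a pointwise identity inside the integral once one takes for $\A_0$ the $*$-subalgebra of (essentially bounded, compactly supported) scalar functions acting by scalar multiplication, since scalars pull through the linear maps $K(x,t)$ and commute with the values of the elements of $\A$. Positivity is where the standing hypothesis on $K$ enters: for a fixed $f$ the map $t\mapsto K(x,t)f(t)f(t)^*$ is Bochner integrable and takes values in the closed convex cone $\C^+$, so its integral $\S_K(f,f)(x)$ lies in $\C^+$ for every $x$ (pair with states of $\C$), and since $\S_K(f,f)$ is continuous by part 1) this says $\S_K(f,f)\in C_b(\RR;\C)^+$. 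Finally, $\{e_n:=\chi_{[-n,n]}\}_n$ is strongly idempotent ($e_me_n=e_m$ for $m\le n$) and is a right approximate identity for the $L^2$-norm because $\|f-fe_n\|_2^2=\int_{|t|>n}\|f(t)\|_\C^2\,d\mu(t)\to0$; combined with boundedness of $\S_K$, Remark \ref{ S bounded + appr id= repres} shows that $\S_K$ satisfies Theorem \ref{quasibanach}, i.e.\ $\S_K$ is $*$-representable.

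The main obstacle I expect is the measure-theoretic bookkeeping for the Bochner integral: one must be careful that the $\C$-valued integrands $t\mapsto K(x,t)f(t)g^*(t)$ and $t\mapsto\partial_x K(x,t)f(t)g^*(t)$ are strongly measurable (combine continuity of $K$, resp.\ $\partial_x K$, with strong measurability of $f,g$ via approximation by simple functions) before applying dominated convergence, and that the differentiation under the integral sign is licensed only by the vector-valued mean value inequality, not an equality. A secondary but genuine subtlety in 2) is that the invariance (I) constrains the bimodule action of $\A_0$ on $L^2(\RR;\C)$, so one must fix that structure (here: scalar multiplication) so that the pointwise rearrangement used for (I) is legitimate; with a noncommutative everywhere-acting $\A_0$ the identity would fail.
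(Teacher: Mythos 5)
Your proof is correct and follows the same overall skeleton as the paper's: the uniform bound $\|\S_K(f,g)(x)\|_\C\le\sup_{t\in\RR}\|K(x,t)\|\cdot\|fg\|_1$ for boundedness, a limit interchange under the Bochner integral for continuity and for \eqref{SK'}, and, for 2), positivity plus verification of hypothesis $(i)$ of Theorem \ref{quasibanach} with $e_n=\chi_{[-n,n]}$. The technical devices in part 1 differ, mostly to your advantage. For continuity the paper estimates the increment by $\sup_{t\in\RR}\|K(x,t)-K(y,t)\|\cdot\|fg\|_1$ and asserts this tends to $0$ as $|x-y|\to0$, which tacitly uses continuity of $x\mapsto K(x,\cdot)$ \emph{uniformly in} $t$ --- not guaranteed by $K\in C_b(\RR^2;\B(\C))$ alone; your dominated-convergence argument with the fixed majorant $2\|K\|_\infty\|f(t)\|_\C\|g(t)\|_\C$ needs only pointwise continuity and is more robust. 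For the derivative the paper introduces the $\C$-valued measure $\nu(f,g)(E)=\int_E f(t)g^*(t)\,d\mu(t)$ via Radon--Nikodym for the Bochner integral and passes the difference quotient inside $\int_\RR K(x,t)\,d\nu(f,g)(t)$ without further comment, whereas you dominate the difference quotients via the vector-valued mean value inequality and apply dominated convergence; both routes are legitimate, but yours makes the interchange explicit where the paper is terse. In part 2 the paper only says that positivity follows from the hypothesis on $K$ and that one argues ``similarly to Example \ref{l2k}''; you supply the missing details (closed-cone argument for positivity, boundedness, the approximate-identity computation), and you are right that the invariance property (I) forces the $\A_0$-action to be by scalar (or central) functions. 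Be aware, though, that this creates a residual tension which the paper does not resolve either: with a scalar $\A_0$ the density clause of Theorem \ref{quasibanach}$(i)$ (approximating $ae_\alpha$ by $a_ne_\alpha$, $a_n\in\A_0$, in the $\S_K$-seminorm) is problematic once $\C\neq\CC$, so the corollary implicitly presupposes a choice of $(\A,\A_0)$ that neither the statement nor the paper's proof pins down.
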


\begin{proof}1) Since $K\in C_b(\RR^2;\B(\C))$ is uniformly bounded and $f,g\in L^2(\RR;\C)$, we have, for every $x\in\RR$
		\begin{multline*}
			\left\|\S_K(f,g)(x)\right\|_\C\leq\int_\RR\|K(x,t)\|\,\|f(t)g^*(t)\|_{\C}\,d\mu(t)\\\leq\sup_{t\in\RR}\|K(x,t)\|\cdot\|fg\|_1<\infty,
		\end{multline*}
		so the function $x\mapsto \S_K(f,g)(x)$
		is uniformly bounded, for every choice of $K,f,g$. To prove its continuity, it suffices to observe that
		for any $x,y\in\RR$ we have:
		\begin{multline*}
			\left\|\S_K(f,g)(x)-\S_K(f,g)(y)\right\|_\C\\\leq
			\int_\RR\|K(x,t)-K(y,t)\|\,\|f(t)g^*(t)\|_{\C}\,d\mu(t)\\
			\leq\sup_{t\in\RR}\|K(x,t)-K(y,t)\|\,\|fg\|_1\to 0,\quad\mbox{as } |x-y|\to0.
		\end{multline*}
		
		Furthermore, let $f,g\in L^2(\RR;\C)$ be fixed. Then, by the Radon-Nykodim Theorem for the Bochner integral, there exists a unique finite $\C-$valued measure, $\nu(f,g)$, defined as
		$$\nu(f,g)(E):=\int_E f(t) g^*(t)d\mu(t),$$
		for every Borel subset $E$ of $\RR$. Therefore, for any given $K\in C_b(\RR^2; \B(\C))$, the mapping
		$$x\mapsto S_K(f,g)(x)=\int_\RR K(x,t)d\nu(f,g)(t)$$
		is continuous. If, in addition, the mapping $K$ is differentiable in its first argument, with  continuous bounded partial derivative, then
		$$\begin{aligned}&\lim_{\Delta x\to 0}\frac{1}{\Delta x}(\S_K(f,g)(x+\Delta x)-\S_K(f,g)(x))\\
			&=\lim_{\Delta x\to 0}\frac{1}{\Delta x}\left(\int_{\RR}K(x+\Delta x,t) d\nu(f,g)(t)-\int_{\RR}K(x,t)d \nu(f,g)(t)\right)\\
			&=\lim_{\Delta x\to 0}\frac{1}{\Delta x}\int_{\RR}(K(x+\Delta x,t)-K(x,t)) d\nu(f,g)(t)\\
			&=\int_{\RR}\partial_x K(x,t) d\nu(f,g)(t).
		\end{aligned}$$
	2) By assuming that $K(x,t)ff^*\in\C^+$, for every $f\in L^2(\RR;\C)$ which is definite $\mu-$almost everywhere,  we prove that the sesquilinear form $\S_K$ is positive. Then, similarly to Example \ref{l2k}, by taking $\{e_n\}_n$ with $e_n:=\chi_{[-n,n]}$ as an approximate identity with respect to $\S_K$, it is possible to show that $\S_K$ satisfies $(i)$ in Theorem \ref{quasibanach}, hence it is *-representable. 
\end{proof}

Keeping the notation and the assumptions from the previous corollary, consider the functional
$$\omega_K(cd)(x)=\int_\RR K(x,t)c(t)d(t)dt$$
for every $c,d\in L_2(\RR;\C)$. Specially, if $K(x,t)$ is a positive operator in $\B(\C)$, we have
$$\omega_K(ff^*)(x)=\int_\RR K(x,t)f(t)f^*(t)dt,$$
thus $\omega_K$ is a positive map with values in $C_b(\RR;\C)$. Indeed, the boundedness follows from
$$\left\|\omega_K(ff^*)\right\|\leq \sup_{t\in\RR}\|K(x,t)\|\cdot\|ff^*\|_1= \sup_{t\in\RR}\|K(x,t)\|\cdot\|f\|_2^2.$$
Consequently, the mapping $\omega_K$ is a positive bounded map with values in the unital algebra $C_b(\RR;\C)$.

Denote by $C(\RR;\B(\C))$ the space of continuous $\B(\C)-$valued functions with real arguments. Recall that the algebra $C(\RR;\B(\C))$, equipped with the pointwise multiplication and involution, is a unital *-algebra. However, the algebra of $\B(\C)-$valued functions which vanish at infinity, denoted as $C_0(\RR;\B(\C))$, equipped with the operations and topology inherited from $C(\RR;\B(\C))$, is an example of a non-unital $*-$algebra. 

In that sense, assume that $\sup_{t\in\RR}\|K(\cdot,t)\|$ belongs to $C_0(\RR;\B(\C))$, i.e.
$$\lim_{|x|\to\infty}\sup_{t\in\RR}\|K(x,t)\|=\mathbf{0}_{C(\RR;\B(\C))}.$$
Then, due to 
$$\left\|\omega_K(ff^*)\right\|\leq \sup_{t\in\RR}\|K(x,t)\|\cdot\|f\|_2^2,$$
the mapping $\omega_K$ has its range contained in the non-unital algebra $C_0(\RR;\B(\C))$. Embed the image of $\omega_K$ into $C_0(\RR;\B(\C))$ via the inclusion operator $\imath$ in the way that $$\imath:\R(\omega_K)\hookrightarrow C_0(\RR;\B(\C)),\quad \imath(u)=u\in C_0(\RR;\B(\C)),$$ 
and denote the composition $\omega_K\circ\imath$ as $\omega_K$ again:
$$\omega_K(cd)\in C_0(\RR;\B(\C)),\quad c,d\in L^2(\RR;\C).$$ This way, the mapping $\omega_K$ is subject to the Corollary \ref{cor:quasibanach}.

\subsection{Noncommutative $L^p$-spaces and generalized integral operators}
The previous section gives a general idea on how to approach the integral operators in the noncommutative non-unital setting. In what follows we further explore this class of operators. 

Let $\H$ be a separable Hilbert space with an orthonormal basis  $\{e_j\}_{j}$ , and let $B_p(\H)$ be the Schatten $p-$ideal with the $p-$norm \cite{GK}. Then, $B_p(\H)$ is a Banach space. For each $m\in\NN$, let $P_m$ denote the orthogonal projection onto the span of $\{e_1,\ldots,e_m\}$. Then, $P_m$ is a finite rank operator for every $m$. Notice that banally it is $P_mP_n=P_m$ for every $m\leq n$.  In a similar way as in \cite[Remark 2.9]{IT2} one can prove the following lemma. 

\begin{lemma}\label{cor: app identity} 
 The normed non-unital  $*-$algebra $B_p(\H) $ has a strongly idempotent right approximate identity  $\{P_m\}_m$  consisting of orthogonal projections.
\end{lemma}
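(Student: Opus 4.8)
The plan is to verify the three asserted properties of $\{P_m\}_m$ as a right approximate identity for the normed $*$-algebra $B_p(\H)$: that each $P_m$ lies in $B_p(\H)$, that $\{P_m\}_m$ is an approximate identity in the relevant sense (i.e.\ $\|T - TP_m\|_p \to 0$ for every $T \in B_p(\H)$), that it is strongly idempotent ($P_m P_n = P_m$ whenever $m \le n$), and that it consists of orthogonal projections. The last two are essentially immediate: $P_m$ is by construction the orthogonal projection onto $\mathrm{span}\{e_1,\dots,e_m\}$, so $P_m = P_m^* = P_m^2$, and for $m \le n$ the range of $P_m$ is contained in the range of $P_n$, whence $P_m P_n = P_m$; moreover each $P_m$ is finite rank, hence trivially in $B_p(\H)$ for every $p$.

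The substantive point, as the statement says, is modelled on \cite[Remark 2.9]{IT2}: I would show $\|T - TP_m\|_p \to 0$ for all $T \in B_p(\H)$. First I would treat finite rank operators $F$: for $F$ of rank $r$, the ranges of $F^*$ span a finite-dimensional subspace, so for $m$ large enough $P_m$ fixes that subspace and $F P_m = F$, giving $\|F - FP_m\|_p = 0$ eventually. Then I would pass to general $T \in B_p(\H)$ by density: finite rank operators are $\|\cdot\|_p$-dense in $B_p(\H)$, so given $\varepsilon > 0$ pick $F$ finite rank with $\|T - F\|_p < \varepsilon$, and estimate
$$\|T - TP_m\|_p \le \|T - F\|_p + \|F - FP_m\|_p + \|(F - T)P_m\|_p \le 2\|T-F\|_p + \|F - FP_m\|_p,$$
using $\|(F-T)P_m\|_p \le \|F - T\|_p \|P_m\| \le \|F-T\|_p$ since $\|P_m\|=1$. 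For $m$ large the middle term vanishes, so $\limsup_m \|T - TP_m\|_p \le 2\varepsilon$, and $\varepsilon$ was arbitrary. This is precisely the statement that $\{P_m\}_m$ is a right approximate identity for $(B_p(\H), B_p(\H))$ with respect to the $p$-norm, in the sense of Definition \ref{defn: rai}.

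I would also record that $B_p(\H)$ is a normed non-unital $*$-algebra: it is a two-sided ideal in $\B(\H)$ closed under the adjoint, the $p$-norm satisfies $\|ST\|_p \le \|S\|\,\|T\|_p$ and $\|S^*\|_p = \|S\|_p$, and it contains no identity because $\mathrm{id}_\H \notin B_p(\H)$ when $\H$ is infinite-dimensional (it is not compact). The only mild obstacle is making sure the approximation argument is phrased for the \emph{right} multiplication $T \mapsto TP_m$ rather than the left one, and that the density of finite rank operators in the $p$-norm is invoked correctly; both are standard facts about Schatten ideals from \cite{GK}, so no real difficulty arises. The argument is otherwise a routine adaptation of \cite[Remark 2.9]{IT2}.
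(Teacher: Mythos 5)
Your overall strategy --- verify the algebraic properties directly and establish $\|T-TP_m\|_p\to 0$ first for finite rank operators and then for general $T\in B_p(\H)$ by density --- is exactly the intended route (the paper gives no details here, deferring entirely to \cite[Remark 2.9]{IT2}). However, one step as written is false: it is not true that $FP_m=F$ for all large $m$ when $F$ is a general finite rank operator. Take $F=\langle\cdot,v\rangle u$ with $v=\sum_{j\ge 1}2^{-j}e_j$; then $\mathrm{ran}(F^*)=\CC v$ is one-dimensional, but it is not contained in $\mathrm{span}\{e_1,\dots,e_m\}$ for any $m$, so $FP_m=\langle\cdot,P_mv\rangle u\neq F$ for every $m$. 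The point is that $P_m\to I$ only strongly; $P_m$ does not eventually fix a given finite-dimensional subspace unless that subspace is spanned by basis vectors.

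The repair is short. For a rank-one operator $F=\langle\cdot,v\rangle u$ one has
$\|F-FP_m\|_p=\|\langle\cdot,(I-P_m)v\rangle u\|_p=\|(I-P_m)v\|\,\|u\|\to 0$,
and hence $\|F-FP_m\|_p\to 0$ for every finite rank $F$ by writing $F$ as a finite sum of rank-one operators and applying the triangle inequality. (Alternatively, work with the dense subspace spanned by the operators $\langle\cdot,e_i\rangle e_j$, for which your ``eventually exact'' claim does hold.) With the middle term of your three-term estimate read as ``tends to $0$'' rather than ``vanishes eventually,'' the rest of your argument --- the $2\varepsilon$ bound using $\|P_m\|_\infty=1$ and the $\|\cdot\|_p$-density of finite rank operators --- goes through verbatim, as do the immediate verifications that $P_m=P_m^*=P_m^2$, that $P_mP_n=P_m$ for $m\le n$, and that each $P_m$ lies in $B_p(\H)$.
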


 In particular, the statement 3) in Corollary \ref{moreGNS alg} applies to the case of   $(B_p(\H),\|\cdot\|_p)$, since it is a normed *-algebra whose right approximate identity $\{P_m\}_m$ is such that $\|R_{P_m}\|\leq1$ for all $m$.

\begin{example} \label{schatten}
    Let $p>2$. Choose some $\{\lambda_j\}_{j}\in \ell_{\frac{p}{p-1}}\cap\ell_{\frac{p}{p-2}}$ with $\lambda_j\geq0$ for all $j\in\NN$ and such that  $ \lambda_i> \lambda_j$ for $i< j$. Let  $\{g_n\}_n$ be a uniformly bounded sequence of positive continuous functions on $[0,1]$ such that $ g_i(t)\geq g_j(t)$ for $i\leq j$ and  for all $t\in[0,1]$.     Define $$W_t=\sum_{j=1}^\infty \lambda_jg_j(t)\ip{\cdot}{e_j}e_j, \quad t\in[0,1]$$  where $\{e_j\}_j$ denotes an orthonormal basis of a separable Hilbert space $\H$. Let $$\omega(A)(t)=tr(AW_t),\quad \forall A\in\B_p(\H),\, t\in[0,1].$$ By some calculations it is not hard to check that $\omega$ is a bounded positive linear map from $ B_p(\H) $ into $ C([0,1]) $ that satisfies the assumptions of the Corollary \ref{cor:quasibanach}. 
\end{example}

Let now $\M$ be a von Neumann algebra
and $\rho$ a normal semifinite faithful trace on $\M$. Denote by $L^p(\rho)$ the Banach space consisting of operators
affiliated with $\M$ which is the completion of the
*-ideal
$$\mathcal{J}_p :=
\{X \in\M : \rho(|X|^p) < \infty\},$$
with respect to the norm $\|X\|_p := \rho(|X|^p)^{1/p}, X\in\ \M$ (see \cite[Example 3.1.7]{MFCT} and references therein).\\
\begin{lemma}\label{projectors} Let $\M$ be a von Neumann algebra which is a factor of type I or II, and $\rho$ be a semifinite trace on $\M$. Let  $ W\in\M$ such that $W\geq0$. Then there exists a sequence $\{P_n\}_n$ of finite projections in $\M$ such that 
$$\lim_{n\to\infty}\|W(I-P_n)\|_p=0.$$
\end{lemma}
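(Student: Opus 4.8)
\textbf{Proof plan for Lemma \ref{projectors}.}

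The plan is to exploit the structure of type I and type II factors to produce an increasing sequence of finite projections whose "left complements" $W^{1/2}(I-P_n)$ shrink in the appropriate norm. First I would reduce to controlling $\|W^{1/2}(I-P_n)\|_{2p}$, using the factorization $W = W^{1/2}W^{1/2}$ together with the tracial inequality $\|W(I-P_n)\|_p = \|W^{1/2}(W^{1/2}(I-P_n))\|_p \le \|W^{1/2}\|_{2p}\,\|W^{1/2}(I-P_n)\|_{2p}$ (Hölder for the noncommutative $L^p$-norms associated with $\rho$); note $W^{1/2}\in\M$ need not lie in $\mathcal J_{2p}$, so in fact I would instead argue directly with $\|W(I-P_n)\|_p^p = \rho\big((I-P_n)W^p(I-P_n)\big) = \rho\big(W^p(I-P_n)\big)$, using the trace property and $(I-P_n)$ a projection, so that everything reduces to showing $\rho\big(W^p(I-P_n)\big)\to 0$ for a suitable sequence $\{P_n\}$, where $W^p\in\M_+$ and $\rho(W^p)<\infty$ is part of the hypothesis that $W$ gives a well-defined element (more precisely, the statement is only nontrivial when $W\in\mathcal J_p$, which I will assume, or one interprets $\|W(I-P_n)\|_p$ as possibly infinite otherwise).

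Next I would construct the projections. Since $\M$ is a factor of type I or II with semifinite trace $\rho$, $\M$ admits an increasing net of finite projections with supremum $I$ (semifiniteness), and because $W^p$ is a positive element with $\rho(W^p)<\infty$, the spectral projections $q_\epsilon := \chi_{[\epsilon,\infty)}(W^p)$ are finite projections for every $\epsilon>0$ (if some $q_\epsilon$ were infinite, then $\rho(W^p)\ge\epsilon\rho(q_\epsilon)=\infty$). One checks $q_\epsilon\uparrow s(W^p)$ (the support projection) as $\epsilon\downarrow 0$, and $\rho\big(W^p(I-q_\epsilon)\big)=\rho\big(W^p\chi_{[0,\epsilon)}(W^p)\big)\le\epsilon\,\rho\big(\chi_{(0,\epsilon)}(W^p)\big)+\rho\big(W^p\chi_{\{0\}}\big)$. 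The second term vanishes and, restricting the trace to the finite algebra $q_\delta\M q_\delta$ for a fixed small $\delta$, the first term is bounded by $\epsilon\,\rho(q_\delta)\to 0$ — more cleanly, $\rho\big(W^p(I-q_\epsilon)\big)\to 0$ by monotone (dominated) convergence for the trace applied to the decreasing sequence $W^p(I-q_{1/n})\downarrow W^p\chi_{\{0\}}(W^p)=0$, all dominated by $W^p\in L^1(\rho)$. Then $P_n := q_{1/n}$ is the desired sequence: each $P_n$ is a finite projection, and $\|W(I-P_n)\|_p^p=\rho\big(W^p(I-P_n)\big)\to 0$.

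The main obstacle I anticipate is the careful handling of the case where $W$ does not lie in $\mathcal J_p$ — i.e., making sense of the statement and of the manipulations $\|W(I-P_n)\|_p^p=\rho(W^p(I-P_n))$ inside $L^p(\rho)$ when $W$ is merely a bounded positive operator; one should either add $W\in\mathcal J_p$ to the hypotheses or work with the extended ($[0,\infty]$-valued) trace and note the conclusion is vacuous otherwise. A secondary technical point is justifying the convergence of the trace along the decreasing sequence $W^p(I-P_n)$: this is normality of $\rho$ together with the fact that $I-P_n\downarrow \chi_{\{0\}}(W^p)$ strongly, combined with $0\le W^p(I-P_n)\le W^p$ and $\rho(W^p)<\infty$, so the dominated convergence theorem for normal traces applies. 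The type I / type II hypothesis enters only through semifiniteness guaranteeing enough finite projections (and that the $q_\epsilon$ are genuinely finite), so the argument is quite robust; I do not expect the factor assumption to be essential beyond that, but I would keep it as stated since it matches the intended application to $L^p(\rho)$.
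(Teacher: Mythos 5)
Your proposal is correct and follows essentially the same route as the paper: both take $P_n$ to be the spectral projections of $W$ (equivalently of $W^p$) above the threshold $1/n$, prove finiteness by the Chebyshev-type estimate $\rho(W^p)\geq n^{-p}\rho(P_n)$, and obtain $\rho\big(W^p(I-P_n)\big)\to 0$ by monotone/normal convergence of the trace, under the tacit assumption $\rho(W^p)<\infty$ that the paper also makes. The only point worth flagging is that the identity $\|W(I-P_n)\|_p^p=\rho\big(W^p(I-P_n)\big)$ uses that $P_n$ commutes with $W$, which is automatic for your (and the paper's) choice of spectral projections but not for arbitrary projections.
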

\begin{proof}
Suppose that $\M$ is a factor of type I or II, and that $\rho$ is a semifinite trace on $\M^+$. It is known that $\rho$ is unique up to a scalar multiplication. Moreover, for each positive $T\in\M$ it follows that
$$\rho(T)=\int_0^\infty \lambda d(D\circ E_T),$$
where $E_T$ is the spectral measure corresponding to $T$ and $D$ is the dimension function, which is also unique up to a scalar multiplication. Given $p \in \mathbb{N}$, define $\alpha:\RR\to\RR$ by $\alpha(t)=t^p$.  If  $W\in\M$ is such that $W\geq0$, then it is $E_{W^p}=E_W\circ\alpha^{-1}$, and for every Borel subset $B\subset\RR$,  $$E_{(WE_{W}([0,1/n]))^p}(B)=E_W(\alpha^{-1}( B) \cap [0,1/n]), \quad  n\in\NN.$$
Consequently, 
$$\rho(W^p)=\int_0^\infty \lambda^p d(D\circ E_W)$$
and
$$\rho((WE_{W}([0,1/n]))^p)=\int_0^{1/n}\lambda^p d(D\circ E_W).$$
If $\rho(W^p)<\infty$ then by the Monotone Convergence Theorem 
$$\int_0^\infty\lambda^p d(D\circ E_W)=\lim_{n\to\infty}\int_{1/n}^\infty \lambda^p d(D\circ E_W).$$
Furthermore, we have that
$$\frac{1}{n^p}\rho(E_W(1/n,\infty))=\frac{1}{n^p}(D\circ E_W((1/n,\infty)))\leq\int_{1/n}^\infty \lambda^p d(D\circ E_W)<\infty,$$
so $E_W((1/n,\infty))$ is finite. By letting $P_n=E_W\left(1/n, \infty\right)$ for each $n\in\NN$, we deduce the thesis.\end{proof}
Now we will consider the applications to $L^2(\rho)$. First recall that, if $T\in L^2(\rho)\cap L^\infty(\rho)$ and $B\in L^\infty(\rho)$, then $TB, BT\in L^2(\rho)$, and
$\|TB\|_2$, $\|BT\|_2\leq \|B\|_\infty\cdot\|T\|_2$. 
We obtain the following corollary:
\begin{corollary}\label{cor: app identity 2}

Let $\M$ be a von Neumann algebra and $\rho$ a semifinite (non finite) trace on $\M$. The normed non-unital quasi $*-$algebra $(L^2(\rho), L^2(\rho)\cap L^\infty(\rho))$ has a strongly idempotent right approximate identity $\{P_\alpha\}_\alpha$ consisting of finite projections in $\M$.
\end{corollary}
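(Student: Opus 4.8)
The plan is to mimic the structure of Lemma \ref{projectors}, upgrading the single positive operator $W$ there to a net of projections that also forms a \emph{strongly idempotent} approximate identity. First I would fix a maximal abelian subalgebra, or rather work directly with the spectral projections of a single strictly positive affiliated element: since $\M$ is semifinite we may choose a positive $W$ affiliated with $\M$ with trivial kernel such that $W \le I$ (for instance $W = (I+H)^{-1}$ for a positive self-adjoint $H$ with $\rho$-finite spectral projections off $0$). For each $\alpha > 0$ set $P_\alpha := E_W\big((\alpha,\infty)\big)$, the spectral projection of $W$. By the argument already carried out in the proof of Lemma \ref{projectors}, each $P_\alpha$ is a finite projection in $\M$ (the estimate $\tfrac{1}{\alpha^p}\rho(E_W((\alpha,\infty)))\le \rho(W^p)$, adapted to $p=2$, gives finiteness), and as $\alpha \downarrow 0$ we have $P_\alpha \uparrow I$ strongly. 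Index the net by $\alpha$ ordered so that $\alpha \le \beta$ means $\alpha \ge \beta$ as real numbers; then $\beta \ge \alpha$ in the net gives $P_\beta \le P_\alpha$ — wait, I need the opposite, so I order the directed set by reverse inclusion of the reals, i.e. declare $\alpha \preceq \beta$ iff $\alpha \ge \beta$ (as reals), so that the projections increase along the net. Then $P_\alpha P_\beta = P_{\max(\alpha,\beta)} = P_\alpha$ when $\alpha \preceq \beta$, which is precisely strong idempotency in the sense of Definition \ref{defn: rai}; moreover all $P_\alpha$ lie in $L^2(\rho)\cap L^\infty(\rho)$ since they are finite projections ($\rho(P_\alpha)<\infty$) and self-adjoint idempotents ($\|P_\alpha\|_\infty \le 1$).

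The remaining point is that $\{P_\alpha\}_\alpha$ is a right approximate identity for $(L^2(\rho), L^2(\rho)\cap L^\infty(\rho))$ with respect to the $\|\cdot\|_2$-norm, i.e. $\|X - XP_\alpha\|_2 \to 0$ for every $X \in L^2(\rho)$. Here I would invoke Lemma \ref{projectors} with $p = 2$: for a fixed $X \in L^2(\rho)$, apply the lemma to $W' := |X^*| = (XX^*)^{1/2}$, which is a positive element with $\rho(|X^*|^2) = \|X\|_2^2 < \infty$, to obtain finite spectral projections $Q_n = E_{W'}((1/n,\infty))$ with $\|W'(I - Q_n)\|_2 \to 0$; using the polar decomposition $X = U|X|$ and $X^* = U^*|X^*|^{\,?}$ — more cleanly, $\|X(I-Q)\|_2 = \| (I-Q)X^* \|_2$ cannot be used directly since our projections multiply on the right, so instead I would write $\|X - XP_\alpha\|_2^2 = \rho\big((I-P_\alpha)X^*X(I-P_\alpha)\big)$ and argue that since $X^*X \in L^1(\rho)$ and $P_\alpha \uparrow I$ strongly with $P_\alpha$ finite, this tends to $0$ by normality of $\rho$ together with the dominated/monotone convergence packaged in Lemma \ref{projectors}. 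The cleanest route: density of $L^2(\rho)\cap L^\infty(\rho)$ in $L^2(\rho)$ reduces the claim to $X \in L^2(\rho)\cap L^\infty(\rho)$, and for such $X$ one has $\|X - XP_\alpha\|_2 = \|X(I-P_\alpha)\|_2$ which is controlled because $I - P_\alpha = E_W([0,\alpha]) \to 0$ strongly and $X$ is bounded, so $X(I-P_\alpha) \to 0$ strongly; combined with the $L^2$-domination $\|X(I-P_\alpha)\|_2 \le \|X\|_2$ and a standard $\sigma$-strong-to-$L^2$ continuity argument (or directly the Lemma applied to $|X^*|$), we get norm convergence.

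The main obstacle I anticipate is the last convergence step: strong operator convergence $X(I-P_\alpha)\to 0$ does not by itself give $\|X(I-P_\alpha)\|_2 \to 0$, so one genuinely needs the trace-class input. The correct argument is to reduce to $X\in\mathcal J_2$ with $X = X_1$ having finite-rank-approximable support, or — most economically — to observe that for $X \in L^2(\rho)\cap L^\infty(\rho)$ the family $\{X(I-P_\alpha)\}$ has $\|X(I-P_\alpha)\|_2^2 = \rho\big(X^*X(I-P_\alpha)\big)$ (using $(I-P_\alpha)^2 = I-P_\alpha$ and $\rho$-centrality), and since $X^*X \in L^1(\rho)^+$ while $(I-P_\alpha)\downarrow 0$ strongly with $\rho(X^*X) < \infty$, normality of $\rho$ forces $\rho(X^*X(I-P_\alpha)) \to 0$. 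Extending from $L^2(\rho)\cap L^\infty(\rho)$ to all of $L^2(\rho)$ is then a routine $\varepsilon/3$ density argument using $\|P_\alpha\|_\infty \le 1$, after which Corollary \ref{cor: app identity 2} follows, and in particular such a $(L^2(\rho), L^2(\rho)\cap L^\infty(\rho))$ falls under the scope of Corollary \ref{cor:quasibanach}.
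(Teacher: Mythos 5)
Your proposal follows the same overall architecture as the paper's proof: produce a nested net of finite projections, verify the approximate-identity property on the bounded part $L^2(\rho)\cap L^\infty(\rho)$, then extend to all of $L^2(\rho)$ by an $\varepsilon/3$ density argument using $\|Q\|_\infty\le 1$ (your last step is literally the paper's). Two differences are worth recording. First, the construction of the net: you build a totally ordered chain $P_\alpha=E_W((\alpha,\infty))$ from a single positive operator $W$ with trivial kernel whose spectral projections off $0$ are $\rho$-finite. Such a $W$ exists only when $I$ is the supremum of countably many finite projections (the countably decomposable / separable-predual case): if each $E_W((1/n,\infty))$ is finite and $\ker W=\{0\}$, then $I=\sup_n E_W((1/n,\infty))$, which fails for, say, $B(\ell^2(\Gamma))$ with $\Gamma$ uncountable. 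Measured against the statement as written (an arbitrary von Neumann algebra with a semifinite trace), this step is a genuine gap. The paper sidesteps it by taking $\{P_\alpha\}_\alpha$ to be the net of \emph{all} finite projections of $\M$, ordered by $\le$; this is upward directed (the join of two finite projections is finite) and is automatically strongly idempotent since $P\le Q$ implies $PQ=P$.

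Second, the convergence on the bounded part: you argue via normality of $\rho$, namely $\|X(I-P_\alpha)\|_2^2=\rho\bigl((X^*X)^{1/2}(I-P_\alpha)(X^*X)^{1/2}\bigr)\downarrow 0$ since the operators inside decrease to $0$ and $\rho(X^*X)<\infty$; this is correct and arguably cleaner. The paper instead invokes Lemma \ref{projectors} for $|F|$, transfers to $F$ via the polar decomposition $F=U|F|$, and then passes from the chosen $P_\epsilon$ to an arbitrary finite projection $Q\ge P_\epsilon$ through the operator inequality $F(I-Q)F^*\le F(I-P_\epsilon)F^*$. You correctly identified and defused the one real pitfall (strong convergence of $X(I-P_\alpha)$ to $0$ does not by itself give $\|\cdot\|_2$-convergence). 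If you replace your spectral chain by the net of all finite projections, or add a countable decomposability hypothesis, your argument is complete.
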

\begin{proof} Let $F\in L^2(\rho)\cap L^\infty(\rho)$. By Lemma \ref{projectors}  for every $\epsilon>0$ there exists a  finite projection $P_\epsilon$ such that $\| |F|(I-P_\epsilon)\|_2<\epsilon.$ It is also $$\| F(I-P_\epsilon)\|_2=\| U|F|(I-P_\epsilon)\|_2\leq \|U\|_\infty\| |F|(I-P_\epsilon)\|_2<\epsilon,$$ where $U$ is the partial isometry from the polar decomposition of $F$. Now, if $Q$ is any finite projection with $P_\epsilon\leq Q$, then, since $F(I-Q)F^*\leq F(I-P_\varepsilon)F^*$, we get $$\|F(I-Q)\|_2=\|(I-Q)F^*\|_2\leq\|(I-P_\varepsilon)F^*\|_2=\|F(I-P_\varepsilon)\|_2<\epsilon.$$
Let  $\{P_\alpha\}_\alpha$ denote the net of finite projections in $L^\infty(\rho)$, then  $P_\alpha P_\beta=P_\alpha$ for every $\alpha\leq \beta$. It is  $$\lim_\alpha\|F(I-P_\alpha)\|_2=0, \quad\forall F\in L^2(\rho)\cap L^\infty(\rho).$$  Given $T\in L^2(\rho)$  and $\epsilon>0$,  there exists a $F\in L^2(\rho)\cap L^\infty(\rho)$ such that  $\|T-F\|_2<\frac{\epsilon}{3}$. Moreover, by the previous arguments, there exists a finite projection $P_\epsilon$, such that $\|F(I-Q)\|_2<\frac{\epsilon}{3}$ for every finite projection $Q$ with  $P_\epsilon\leq Q$. Hence we deduce that \begin{multline*}
    \|T(I-Q)\|_2\leq\|T-F\|_2+\|F(I-Q)\|_2+\|(F-T)Q\|_2\\\leq \|T-F\|_2+\|F(I-Q)\|_2+\|T-F\|_2\|Q\|_\infty<\epsilon.
\end{multline*} for every finite projection $Q$ with  $P_\epsilon\leq Q$, hence  in particular $$\lim_\alpha\|T(I-P_\alpha)\|_2=0.$$ 
\end{proof}

\begin{example}\label{ex: 4.11}
    Consider the $(L^2(\rho), L^\infty(\rho)\cap L^2(\rho))$, where $\rho$ is semi-finite trace. Let $W\in L^\infty(\rho)\cap L^2(\rho)$ such that $W\geq0$. By Lemma \ref{projectors}, there exists a finite projection $P$ such that $PW=WP$. Then, $WP=PWP\geq0$. Let $k$ be a nonnegative continuous function on $[0,\|WP\|]\times[0,\|WP\|]$. For each $x\in[0,\|WP\|]$, let $f_x\in C([0,\|WP\|])$  be given by $f_x(t)=k(x,t)$ and $W_x:=f_x(W P)$. Define $\omega$ on $L^2(\rho)$ by $$\omega(A)(x)=\rho(A((I-P)W+W_x)), \quad \forall x\in[0,\|WP\|], A\in L^2(\rho).$$ By some calculations, it is not hard to check that $ \omega $ is a bounded, positive linear map from $ L^2(\rho) $ into $ C([0,\|WP\|])$ that satisfies the assumption in Corollary \ref{cor:quasibanach}. Observe that $\omega$ induces also a bounded  positive sesquilinear $C([0,\|WP\|])$-valued map on $L^2(\rho)\times L^2(\rho)$ given by $$\phi(X,Y)(x)=\rho(X((I-P)W+W_x)Y^*),\quad x\in [0,\|WP\|],$$ for all $X,Y\in L^2(\rho)$.\\
     Now, let $L^2([0,\|WP\|],\B(\H))$  with respect to the Gel'fand-Pettis integral (see \cite{Jocic}) and choose  $A_t\in L^2([0,\|WP\|],\B(\H))$
  such that $A_t\geq0$ for a.e. $t\in [0,\|WP\|]$. Similarly as earlier, we can construct a bounded positive linear map $\theta:L^2(\rho) \to \B(\H)$ with $$\theta(X)=\int_0^{\|WP\|}\rho(X((I-P)W+W_t))A_t \,dt, \quad X\in L^2(\rho).$$ Finally, put $\B^a(\ell_2(C([0,\|WP\|])))$ the C*-algebra of all adjointable bounded operators on $\ell_2(C([0,\|WP\|]))$ which are linear w.r. to $C([0,\|WP\|])$ and define $$\widetilde{\theta}:L^2(\rho)\to \B^a(\ell_2(C([0,\|WP\|])))$$ by $$\widetilde{\theta}(X)(f_1,f_2,...)=(\omega(X)f_1,\omega(X)f_2,...), $$ for every $ X\in L^2(\rho)$, $(f_1,f_2,...)\in \ell_2(C([0,\|WP\|]))$. Then both $\theta$ and $\widetilde{\theta}$ satisfy the assumptions of Corollary \ref{cor:quasibanach}. Notice that in a similar way as for $\omega$, the maps $\theta$ and $\widetilde{\theta}$ induce bounded  invariant positive sesquilinear maps on $L^2(\rho)\times L^2(\rho)$ that take values in $\B(\H)$ and $\B^a(\ell_2(C([0,\|WP\|])))$, respectively.
\end{example}

 \begin{remark}\label{genint}
    If $T:L^2(\RR)\to C([0,1])$ is an integral operator given by $$T(f)(x)=\int_\RR k(x,t)f(t)dt, \quad \forall x\in[0,1],\, f\in L^2(\RR)$$where $k(x,\cdot)\in L^2(\RR)$ for all $x\in[0,1]$, then $k$ induces a mapping $\eta:[0,1]\to L^2(\RR)$ given by $$\eta(x)=k(x,\cdot), \quad \forall x\in[0,1].$$ Moreover, $$T(f)(x)=\int_\RR \eta(x)(t)f(t)dt, \quad \forall x\in[0,1],\, f\in L^2(\RR).$$Hence, if $\widetilde{\eta}$ is a mapping from $[0,1]$ into $L^2(\rho)$ and $\widetilde{T}$ is an operator on $L^2(\rho)$ given by $$\widetilde{T}(Y)(x)=\rho(Y\widetilde{\eta}(x)),\quad \forall x\in[0,1],\, Y\in L^2(\rho),$$ then, if $\widetilde{T}(Y)(\cdot)$ belongs to $C([0,1])$ for every $Y\in L^2(\rho)$, $\widetilde{T}$ can be  considered as a generalized integral operator on $L^2(\rho)$. Thus, the map $\omega$ in Example \ref{ex: 4.11} can be in this way interpreted as a generalized integral operator on $L^2(\rho)$. A similar consideration applies to the positive map constructed in Example \ref{schatten}.  \end{remark}

\section{Topologically transitive operators on noncommutative $L^2$-spaces }\label{sec: Inf VN algebras} Let $\M$ be a properly infinite von Neumann algebra over a separable Hilbert space $\H$. Then, $\H$ can be written as $\H=\oplus_{j\in\ZZ} \H_j$, where for each $j$ the orthogonal projection onto $\H_j$, denoted as $P_j$, belongs to $\M$, and $P_{j_1}\sim P_{j_2}$, for all $j_1,j_2\in\ZZ$. Let $\{W_j\}_{j\in\ZZ}$ be a uniformly bounded sequence in $\M$, and set \begin{equation}\label{eq: W}W:=\sum_{j\in\ZZ}\lambda_jP_jW_jP_j,\end{equation} where $\lambda_j\geq0$ for all $j$, and $\lim\limits_{|j|\to\infty} \lambda_j=0$. We have the following 
\begin{lemma}\label{lem: 5.1}
The above defined operator $W$ is a well-defined bounded linear operator which belongs to $\M$.
\end{lemma}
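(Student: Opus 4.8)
The plan is to exploit the fact that the summands of \eqref{eq: W} have pairwise orthogonal ranges (and act nontrivially only on pairwise orthogonal subspaces of $\H$) in order to show that the partial sums form a Cauchy net in the operator norm of $\B(\H)$, and then to invoke the norm-closedness of the von Neumann algebra $\M$.

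First I would set $M_0 := \sup_{j\in\ZZ}\|W_j\| < \infty$ and, for a finite subset $F\subset\ZZ$, put $S_F := \sum_{j\in F}\lambda_j P_jW_jP_j$, which lies in $\M$ as a finite linear combination of products of elements of $\M$. Given $\xi\in\H$, I would decompose $\xi = \sum_{j\in\ZZ}\xi_j$ with $\xi_j := P_j\xi\in\H_j$; since $P_jW_jP_j\xi = P_jW_j\xi_j\in\H_j$, the vectors $\{\lambda_jP_jW_j\xi_j\}_{j\in F}$ lie in pairwise orthogonal subspaces. Hence
$$\|S_F\xi\|^2 = \sum_{j\in F}\lambda_j^2\|P_jW_j\xi_j\|^2 \le \Bigl(\sup_{j\in F}\lambda_j\|W_j\|\Bigr)^2\sum_{j\in F}\|\xi_j\|^2 \le \Bigl(\sup_{j\in F}\lambda_j\|W_j\|\Bigr)^2\|\xi\|^2,$$
which yields the crucial estimate $\|S_F\| \le \sup_{j\in F}\lambda_j\|W_j\| \le M_0\sup_{j\in F}\lambda_j$.

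Next I would pass to the partial sums $S_N := S_{\{j\,:\,|j|\le N\}}$. Because $\lambda_j\to 0$ as $|j|\to\infty$, for $N' > N$ the difference $S_{N'}-S_N = S_{\{j\,:\,N<|j|\le N'\}}$ satisfies $\|S_{N'}-S_N\| \le M_0\sup_{|j|>N}\lambda_j \to 0$; thus $(S_N)_N$ is Cauchy in $\B(\H)$ and converges in operator norm to a bounded linear operator $W$, which is exactly the sum in \eqref{eq: W}. (Equivalently, the net $(S_F)_F$ of all finite partial sums is norm-Cauchy, so the series converges unconditionally.) Finally, each $S_N$ belongs to $\M$, and $\M$, being a von Neumann algebra, is norm-closed; therefore $W\in\M$.

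I do not expect a genuine obstacle here: the only point requiring care is the observation that, because the terms $\lambda_jP_jW_jP_j$ have mutually orthogonal ranges, the norm of a partial sum is governed by the \emph{supremum} of $\lambda_j\|W_j\|$ rather than by their sum — and it is precisely this that converts the hypothesis $\lambda_j\to 0$ into norm convergence.
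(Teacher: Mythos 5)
Your proof is correct, and it takes a genuinely different (in fact sharper) route than the paper's. The paper establishes convergence of the partial sums only in the strong operator topology: for fixed $x\in\H$ it bounds the tail by $M^2R^2\sum_{|j|\ge N}\|P_jx\|^2$ with $R=\sup_j\lambda_j$, so it uses only the \emph{boundedness} of $\{\lambda_j\}_j$ and $\{W_j\}_j$ (the hypothesis $\lambda_j\to0$ is not needed for the lemma itself and is invoked only afterwards), and then concludes $W\in\M$ from the strong closedness of $\M$. You instead exploit the mutual orthogonality of the ranges to get the operator-norm bound $\|S_F\|\le M_0\sup_{j\in F}\lambda_j$ on any finite block, which together with $\lambda_j\to0$ gives unconditional convergence of the series in the norm of $\B(\H)$ and lets you finish with norm-closedness of $\M$. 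Your argument requires the extra hypothesis $\lambda_j\to0$ that the paper's does not, but it buys a stronger conclusion: norm convergence of the partial sums immediately yields the estimate $\bigl\|W\bigl(I-\sum_{|j|\le n}P_j\bigr)\bigr\|\to0$, which the paper has to prove by a separate computation right after the lemma. Both proofs are valid; the key estimate $\|S_F\xi\|^2=\sum_{j\in F}\lambda_j^2\|P_jW_jP_j\xi\|^2\le\bigl(\sup_{j\in F}\lambda_j\|W_j\|\bigr)^2\|\xi\|^2$ in yours is sound.
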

\begin{proof}
Since $\{W_j\}_{j\in\ZZ}$ is uniformly bounded, there exists some $M>0$ such that $\|W_j\|\leq M$ for all $j\in\ZZ$. If $x\in\H$, then $\|x\|^2=\sum\limits_{j\in\ZZ}\|P_jx\|^2$. Hence, given $\varepsilon>0$, there exists some $N\in\NN$ such that
$$\sum_{j=N}^\infty \|P_jx\|^2<\frac{\varepsilon}{2M^2R^2},\quad \textrm{and}\quad\sum_{j=N}^{\infty}\|P_{-j}x\|^2<\frac{\varepsilon}{2M^2R^2},$$
where $R=\sup_{j\in\ZZ}\{\lambda_j\}$. Thus, for every $n,m\in\NN$ such that $m>n\geq N$, we obtain
$$\begin{aligned}
\left\|\sum_{j=n}^m \lambda_jP_jW_jP_jx\right
\|^2&=\sum_{j=n}^m\lambda_j^2\|P_jW_jP_jx\|^2\\
&\leq R^2\sum_{j=n}^m\|P_jW_j\|^2\|P_jx\|^2\\
&\leq M^2R^2\sum_{j=n}^m\|P_jx\|^2\\
&<\varepsilon/2.\end{aligned}$$
Similarly, we get
$$\left\|\sum_{j=n}^m \lambda_{-j}P_{-j}W_{-j}P_{-j}x\right\|^2<\varepsilon/2.$$
Hence, the sums $\sum\limits_{j=1}^\infty \lambda_jP_jW_jP_jx$, and $\sum\limits_{j=1}^\infty \lambda_{-j}P_{-j}W_{-j}P_{-j}x$ are convergent in $\H$. Since $x\in\H$ was chosen arbitrarily, we deduce that
$$s-\lim_{n\to\infty}\sum_{j=1}^n \lambda_jP_jW_jP_j\in \M,\quad s-\lim_{n\to\infty}\sum_{j=1}^n \lambda_{-j}P_{-j}W_{-j}P_{-j}\in \M,$$
so $W$ belongs to $\M$ as well.
\end{proof}

If $W_j\geq0$ for every $j\in\ZZ$ and the assumptions of Lemma \ref{lem: 5.1} hold,  then $W$ is a bounded positive linear operator. Moreover, since $\lim\limits_{|j|\to\infty}\lambda_j=0$, we get $$\lim\limits_{n\to\infty}\left\|W(I-\sum_{j=-n}^n P_j)\right\|=0.$$
Indeed, for each $x\in\H$ and every $n\in\NN$, we have 
$$\begin{aligned}
\left\|W(I-\sum_{j=-n}^n P_j)x\right\|^2&=\sum_{j={n+1}}^\infty \lambda_j^2\|P_jW_jP_j x\|^2+\sum_{j={n+1}}^\infty \lambda_{-j}^2\|P_{-j}W_{-j}P_{-j}x\|^2\\
&\leq \sup_{j>n}\lambda_j^2 M^2\sum_{j=n+1}^\infty \|P_jx\|^2+\sup_{j>n}\lambda_{-j}^2 M^2\sum_{j=n+1}^\infty\|P_{-j}x\|^2\\
&\leq\sup_{|j|>n}\lambda_j^2 M^2\left(\sum_{j\in\ZZ}\|P_jx\|^2\right)=\sup_{|j|>n}\lambda^2_j M^2\|x\|^2.
\end{aligned}$$
Next, since $P_j\sim P_{j+1}$ for all $j\in\ZZ$, there exists for each $j$ a partial isometry $U_j\in \M$, that maps $\H_j$ isometrically onto $\H_{j+1}$.  Given $x\in\H$ and $\varepsilon>0$, choose again some $N>0$ such that $\sum_{j=N}^\infty\|P_jx\|^2<\varepsilon/2$ and $\sum_{j=N}^\infty \|P_{-j}x\|^2<\varepsilon/2$. For each $m>n>N$, we have that 
$$\left\|\sum_{j=n}^m P_{j+1} U_j P_j x\right\|^2\leq\sum_{j=n}^\infty\|P_jx\|^2<\varepsilon/2,$$
and, similarly, 
$$\left\|\sum_{j=n}^mP_{-j+1}U_{-j}P_{-j}x\right\|^2<\varepsilon/2.$$
For an argument similar to the one above, we deduce that 
$$s-\lim_{n\to\infty} \sum_{j=0}^n P_{j+1}U_jP_j\in \M,\quad s-\lim_{n\to\infty} \sum_{j=1}^n P_{-j+1}U_{-j}P_{-j}\in \M.$$
Set $$V=2\left( s-\lim_{n\to\infty} \sum_{j=1}^n P_{-j+1}U_{-j}P_{-j}\right)+\frac{1}{2}\left(s-\lim_{n\to\infty} \sum_{j=0}^n P_{j+1}U_jP_j\right).$$
It is straightforward to check that $V$ is invertible and 
$$V^{-1}=\frac{1}{2}\left(s-\lim_{n\to\infty} \sum_{j=1}^n P_{-j}U^*_{-j}P_{-j+1}\right)+2\left(s-\lim_{n\to\infty} \sum_{j=0}^n P_jU^*_jP_{j+1}\right).$$
For each $j\in\NN$, and all $n>j$, one can check that
$$V^nP_{-j}=2^{2j-n}\left(U_{-j+n-1}\cdot\ldots\cdot U_{-j+1}U_{-j}P_{-j}\right),$$ 
while for all $n\in\NN$, we have that
$$V^nP_j=2^{-n}\left(U_{j+n-1}\cdot\ldots\cdot U_{j+1}U_jP_j\right).$$
The latter also holds when $j=0$. Hence we get 
$$\left\|V^n\left(\sum_{j=-k}^k P_j\right)\right\|\leq\sum_{j=-k}^k\|V^nP_j\|\leq 2k2^{2k-n},$$
whenever $n>k$ and $k\in\NN$, so 
$$\lim_{n\to\infty}\left\|V^n\left(\sum_{j=-k}^k P_j\right)\right\|=0,\quad \forall k\in\NN.$$
By similar calculations, one can check that
$$\lim_{n\to\infty}\left\|V^{-n}\left(\sum_{j=-k}^k P_j\right)\right\|=0,\quad \forall k\in\NN.$$

Let $R_{V^*}: L^2(\rho)\to L^2(\rho)$ be given by $R_{V^*}(A):=AV^*$, for all $A\in L^2(\rho)$. Since $\|AV^*\|_2\leq \|A\|_2\|V^*\|_\infty$ for all $A\in L^2(\rho)$, it follows that  $R_{V^*}$ is a well-defined bounded linear operator on $L^2(\rho)$. Consider the positive sesquilinear form $\S: L^2(\rho)\times L^2(\rho)\to\C$, given by 
$$\S_W(X,Y):=\rho(XWY^*),\quad X,Y\in L^2(\rho),$$
where $W$ is the positive linear operator in $\M$, introduced in \eqref{eq: W}. Since $\S_W$ is positive and sesquilinear, it induces a semi-norm $\|\cdot\|_W$ on $L^2(\rho)$, given by $\|X\|_W:=(\S_W(X,X))^{1/2}$ for all $X\in L^2(\rho)$. Let $(L^2(\rho),\tau_W)$  be the locally convex topological vector space, equipped with the topology $\tau_W$ generated by the seminorm $\|\cdot\|_W$. \\

We are ready to present the main result of this section.  Recall the definition of  topologically transitive operator, see Definition \ref{defn: trans op}.

\begin{proposition}\label{prop: prop 4.1}
 The operator $R_{V^*}$ is topologically transitive on $(L^2(\rho),\tau_W)$.
\end{proposition}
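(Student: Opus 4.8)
The plan is to establish topological transitivity via the Gethner--Shapiro/Kitai transitivity criterion, taking as ambient dense set the elements of $L^2(\rho)$ that are supported on the right on finitely many of the blocks $\H_j$; the decay estimates for $V^{\pm n}$ on finite blocks, already proved above, will supply the two hypotheses of the criterion.

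First I would fix a convenient dense subspace. Since $W\ge 0$ in $\M$ (Lemma \ref{lem: 5.1} under the standing assumption $W_j\ge 0$), one has $\|X\|_W^2=\rho(XWX^*)\le\|W\|\,\rho(XX^*)=\|W\|\,\|X\|_2^2$, so the norm topology of $L^2(\rho)$ is finer than $\tau_W$, and hence every $\|\cdot\|_2$-dense set is $\tau_W$-dense. For $k\in\NN$ put $Q_k:=\sum_{j=-k}^k P_j\in\M$ and let $\mathcal D:=\bigcup_k L^2(\rho)Q_k$. One checks at once that $\mathcal D$ is a linear subspace (for $k\le l$ one has $AQ_k+BQ_l=(AQ_k+BQ_l)Q_l$), and it is $\|\cdot\|_2$-dense: since $Q_k\uparrow I$ strongly, a routine computation with the $L^2$-norm (using normality of $\rho$, as in the proof of Corollary \ref{cor: app identity 2}) gives $\|X-XQ_k\|_2=\|X(I-Q_k)\|_2\to 0$ for every $X\in L^2(\rho)$. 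Hence $\mathcal D$ is $\tau_W$-dense.

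Next I would record the decay of both $R_{V^*}^n$ and $R_{V^*}^{-n}$ along $\mathcal D$. Since $V$ is invertible in $\M$ (as noted above), $R_{V^*}$ is invertible on $L^2(\rho)$ with inverse $R_{(V^{-1})^*}$, and $R_{V^*}^{\pm n}$ is right multiplication by $(V^{\pm n})^*$. For $U=YQ_k\in\mathcal D$ one computes $R_{V^*}^n U=YQ_k(V^n)^*=Y(V^nQ_k)^*$, hence
$$\|R_{V^*}^n U\|_W^2=\rho\big(Y(V^nQ_k)^*W(V^nQ_k)Y^*\big)\le\|V^nQ_k\|^2\,\|W\|\,\|Y\|_2^2,$$
using that $(V^nQ_k)^*W(V^nQ_k)\in\M^+$ has norm at most $\|V^nQ_k\|^2\|W\|$ together with $\rho(YZY^*)\le\|Z\|\,\|Y\|_2^2$ for $Z\in\M^+$ and $Y\in L^2(\rho)$. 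Since $\|V^nQ_k\|=\|V^n\sum_{j=-k}^kP_j\|\to 0$ for each $k$ (shown above), $\|R_{V^*}^n U\|_W\to 0$; the same computation with $V^{-n}$ in place of $V^n$, together with the companion estimate $\|V^{-n}\sum_{j=-k}^kP_j\|\to 0$, gives $\|R_{V^*}^{-n}U\|_W\to 0$ for every $U\in\mathcal D$.

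Finally, the transitivity argument is the standard one: given nonempty $\tau_W$-open sets $\mathcal O_1,\mathcal O_2$, choose $U\in\mathcal D\cap\mathcal O_1$ and $Z\in\mathcal D\cap\mathcal O_2$, and for $N\in\NN$ set $X_N:=U+R_{V^*}^{-N}Z$. By the decay of $R_{V^*}^{-n}$ one has $\|X_N-U\|_W\to 0$, so $X_N\in\mathcal O_1$ for $N$ large; and since $R_{V^*}^N R_{V^*}^{-N}=\mathrm{id}$, one has $R_{V^*}^N X_N=R_{V^*}^N U+Z$, so by the decay of $R_{V^*}^n$, $\|R_{V^*}^N X_N-Z\|_W\to 0$ and $R_{V^*}^N X_N\in\mathcal O_2$ for $N$ large; for any such $N$, $R_{V^*}^N X_N\in R_{V^*}^N(\mathcal O_1)\cap\mathcal O_2$, which is the assertion of Definition \ref{defn: trans op}. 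I expect the main obstacle here to be organizational rather than analytic: isolating a dense subspace on which $R_{V^*}^{n}$ \emph{and} $R_{V^*}^{-n}$ simultaneously contract in $\|\cdot\|_W$ (this is what forces the ``finite right support'' choice and uses both one-sided norm estimates), and checking that the criterion still runs when $\tau_W$ is merely a seminorm topology — the argument above uses neither Hausdorffness of $\tau_W$ nor $\tau_W$-continuity of $R_{V^*}$.
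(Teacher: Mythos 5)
Your proposal is correct and follows essentially the same route as the paper: approximate elements of the open sets by elements with finite right support $FQ_k=F$, use the decay $\|V^{\pm n}Q_k\|\to 0$ to kill both the forward orbit of $F_1$ and the backward orbit of $F_2$ in $\|\cdot\|_W$, and conclude with $X_n=F_1+R_{V^*}^{-n}(F_2)$. The only (immaterial) difference is in the density step: you deduce $\tau_W$-density of $\mathcal D$ from $\|\cdot\|_2$-density via $\|X\|_W^2\le\|W\|\,\|X\|_2^2$, whereas the paper estimates $\|X(I-Q_n)\|_W$ directly using $\|W(I-Q_n)\|_\infty\to 0$.
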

\begin{proof}
As observed earlier, we have that 
$$\lim_{n\to\infty}\left\|W\left(I-\sum_{j=-n}^n P_j\right)\right\|_\infty=0.$$
Hence, for each $X\in L^2(\rho)$ we get 
$$\begin{aligned}
\left\|X\left(I-\sum_{j=-n}^n P_j\right)\right\|_W^2&=\rho\left(X\left(I-\sum_{j=-n}^n P_j\right)W\left(I-\sum_{j=-n}^n P_j\right)X^*\right)\\
&\leq\|X\|^2_2\left\|\left(I-\sum_{j=-n}^n P_j\right)W\left(I-\sum_{j=-n}^n P_j\right)\right\|_\infty \\
&\leq \|X\|^2_2\left\|W\left(I-\sum_{j=-n}^n P_j\right)\right\|_\infty\to0,\quad n\to\infty.
\end{aligned}$$
Thus, given two non-empty $\tau_W-$open subsets $\mathcal{O}_1$ and $\mathcal{O}_2$ of $L^2(\rho)$, there exist some $F_1\in\mathcal{O}_1$, and $F_2\in\mathcal{O}_2$, and $k\in\NN$, such that $F_1\left(\sum\limits_{j={-k}}^k P_j\right)=F_1$, and $F_2\left(\sum\limits_{j={-k}}^k P_j\right)=F_2$. Hence, we obtain that
$$\begin{aligned}\|R^n_{V^*}(F_1)\|^2_W&=\rho(F_1(V^n)^*WV^nF_1^*)\\
&\leq\|V^nF_1^*\|_2^2\,\,\|W\|_\infty\\
&\leq\left\|V^n\left(\sum_{j=-k}^k P_j\right)F_1^*\right\|_2^2\|W\|_\infty\\
&\leq\left\|V^n\left(\sum_{j=-k}^k P_j\right)\right\|_\infty ^2\left\|F_1^*\right\|_2^2\|W\|_\infty\to0,\quad n\to\infty.
\end{aligned}
$$
Similarly, since 
$$\lim_{n\to\infty}\left\|V^{-n}\left(\sum_{j=-k}^k P_j\right)\right\|_\infty ^2=0,$$
we get
$$\lim_{n\to\infty}\|R^{-n}_{V^*}(F_2)\|_W^2=\lim_{n\to\infty}\|R_{(V^{-n})^*}(F_2)\|_W^2=0.$$
For each $n\in\NN$, set $X_n:=F_1+R^{-n}_{V^*}(F_2)$. Then, 
$$\|X_n-F_1\|_W=\|R^{-n}_{V^*}(F_2)\|_W\to0,\quad n\to\infty,$$
and
$$\|R^n_{V^*}(X_n)-F_2\|_W=\|R^n_{V^*}(F_1)\|_W\to0,\quad n\to\infty.$$
It follows that we can find some $n_0\in\NN$ such that $X_{n_0}\in\mathcal{O}_1$ and $R_{V^*}^{n_0}(X_{n_0})\in\mathcal{O}_2$. \end{proof}\bigskip
For each $n\in\NN$, we will now consider the cosine operator function generated by $R_{V^*}$, that is for every $n\in\NN$, we put $C^{(n)}=\frac{1}{2}\left(R_{V^*}^n+R_{V^*}^{-n}\right)$. The main idea for the proof of the next proposition is inspired by the proof of \cite[Theorem 5]{K}. 

\begin{proposition}
    The sequence $\{C^{(n)}\}_n$
is topologically transitive on $(L^2(\rho), \tau_W)$.\end{proposition}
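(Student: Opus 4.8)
The plan is to mimic the argument for Proposition \ref{prop: prop 4.1}, exploiting the two-sided decay estimates already established for the powers of $V$ restricted to the finite-dimensional reducing subspaces. First I would recall the key facts collected before Proposition \ref{prop: prop 4.1}: for every $k\in\NN$ one has
$$\lim_{n\to\infty}\left\|V^n\left(\sum_{j=-k}^k P_j\right)\right\|_\infty=0\quad\textrm{and}\quad\lim_{n\to\infty}\left\|V^{-n}\left(\sum_{j=-k}^k P_j\right)\right\|_\infty=0,$$
together with the fact that for every $X\in L^2(\rho)$ we have $\left\|X\left(I-\sum_{j=-k}^k P_j\right)\right\|_W\to0$ as $k\to\infty$. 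Combining these as in the proof of Proposition \ref{prop: prop 4.1}, one gets that for a \emph{fixed} $F\in L^2(\rho)$ with $F\left(\sum_{j=-k}^k P_j\right)=F$ both $\|R_{V^*}^n(F)\|_W\to0$ and $\|R_{V^*}^{-n}(F)\|_W\to0$ as $n\to\infty$; consequently $\|C^{(n)}(F)\|_W\le\frac12\left(\|R_{V^*}^n(F)\|_W+\|R_{V^*}^{-n}(F)\|_W\right)\to0$.

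Next, given two non-empty $\tau_W$-open subsets $\mathcal{O}_1,\mathcal{O}_2$ of $L^2(\rho)$, I would pick $F_1\in\mathcal{O}_1$, $F_2\in\mathcal{O}_2$ and $k\in\NN$ with $F_i\left(\sum_{j=-k}^k P_j\right)=F_i$ for $i=1,2$, exactly as before. The natural candidate to put in $\mathcal{O}_1$ is $X_n:=F_1+2\,R_{V^*}^{-n}(F_2)$. Then $\|X_n-F_1\|_W=2\|R_{V^*}^{-n}(F_2)\|_W\to0$, so $X_n\in\mathcal{O}_1$ for large $n$. On the other hand,
$$C^{(n)}(X_n)=\tfrac12\left(R_{V^*}^n+R_{V^*}^{-n}\right)\!\left(F_1+2R_{V^*}^{-n}(F_2)\right)=\tfrac12 R_{V^*}^n(F_1)+\tfrac12 R_{V^*}^{-n}(F_1)+R_{V^*}^{-2n}(F_2)+F_2,$$
using $R_{V^*}^n R_{V^*}^{-n}=\mathrm{id}$ (the operators $R_{V^*}^{\pm n}$ are mutually inverse since $V$ is invertible). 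The first three terms all tend to $0$ in $\|\cdot\|_W$: the first two because $F_1$ is supported on $\sum_{j=-k}^k P_j$ and hence $\|R_{V^*}^{\pm n}(F_1)\|_W\to0$ by the estimate above, and the third because $F_2$ is likewise supported on $\sum_{j=-k}^k P_j$ so $\|R_{V^*}^{-2n}(F_2)\|_W\le\left\|V^{-2n}\left(\sum_{j=-k}^k P_j\right)\right\|_\infty\|F_2^*\|_2\|W\|_\infty^{1/2}\to0$. Therefore $\|C^{(n)}(X_n)-F_2\|_W\to0$, so $C^{(n)}(X_n)\in\mathcal{O}_2$ for large $n$, and picking $n_0$ large enough to satisfy both inclusions simultaneously gives $C^{(n_0)}(\mathcal{O}_1)\cap\mathcal{O}_2\ne\emptyset$, which is precisely topological transitivity of $\{C^{(n)}\}_n$ on $(L^2(\rho),\tau_W)$ per Definition \ref{defn: trans op}.

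The only genuinely delicate point — and the one I would be most careful with — is the bookkeeping with the factor $2$ in $X_n$: one needs $C^{(n)}$ applied to $2R_{V^*}^{-n}(F_2)$ to produce the surviving term $F_2$ via the cross-term $R_{V^*}^n\cdot R_{V^*}^{-n}=\mathrm{id}$, while the other cross-term $R_{V^*}^{-n}\cdot R_{V^*}^{-n}=R_{V^*}^{-2n}$ must still vanish on the finitely-supported $F_2$; this is why one works with $X_n=F_1+2R_{V^*}^{-n}(F_2)$ rather than the more naive $F_1+R_{V^*}^{-n}(F_2)$, and it mirrors the trick from \cite[Theorem 5]{K}. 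Everything else is a routine repetition of the norm estimates already in hand, using only that $R_{V^*}$ is bounded on $L^2(\rho)$, that $V$ is invertible in $\M$, and that $\|XB\|_2\le\|X\|_2\|B\|_\infty$ together with $\|X\|_W^2\le\|X\|_2^2\|W\|_\infty$.
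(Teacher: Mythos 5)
Your proof is correct and follows essentially the same route as the paper: both rely on the decay of $\|R_{V^*}^{\pm n}(F)\|_W$ for $F$ supported on $\sum_{j=-k}^{k}P_j$ and perturb $F_1$ by an approximate $C^{(n)}$-preimage of $F_2$, the only (immaterial) difference being that the paper takes the symmetric choice $X_n=F_1+R_{V^*}^{n}(F_2)+R_{V^*}^{-n}(F_2)$, where the two cross-terms each contribute $\tfrac12 F_2$, instead of your $X_n=F_1+2R_{V^*}^{-n}(F_2)$.
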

\begin{proof}
    Let $\mathcal{O}_1$ and $\mathcal{O}_2$ be non-empty open subsets of $(L^2(\rho), \tau_W)$. As in the proof of Proposition \ref{prop: prop 4.1}, choose $F_1\in\mathcal{O}_1$ and $F_2\in \mathcal{O}_2$ such that $$F_1\left(\sum_{j=-k}^kP_j\right)=F_1 \quad\mbox{ and }\quad F_2\left(\sum_{j=-k}^kP_j\right)=F_2, \quad\mbox{ for some }k\in\NN.$$ By the same arguments as the ones in the proof of Proposition \ref{prop: prop 4.1}, we obtain that \begin{multline*}
        \lim_{n\to\infty}\|R^{n}_{V^*}(F_1)\|_W=\lim_{n\to\infty}\|R^{-n}_{V^*}(F_1)\|_W=\lim_{n\to\infty}\|R^{n}_{V^*}(F_2)\|_W\\=\lim_{n\to\infty}\|R^{-n}_{V^*}(F_2)\|_W=0.
    \end{multline*} For each $n\in\NN$, put $X_n:=F_1+R^{n}_{V^*}(F_2)+R^{-n}_{V^*}(F_2)$. Since $\|\cdot\|_W$ is a seminorm, then it satisfies the triangle inequality, hence it follows that $X_n\underset{\tau_W}{\to}F_1$ and $C^{(n)}(X_n)\underset{\tau_W}{\to}F_2$. Thus, there exists a $N\in\NN$ such that $C^{(N)}(\mathcal{O}_1)\cap\mathcal{O}_2\neq\emptyset$.
\end{proof}

\noindent\textbf{Data availability statement.} Data availability not applicable since no data sets were generated during this research.\\

\noindent\textbf{Declaration on conflict of interest.} The authors declare there is no conflict of interest in publishing the results obtained in this research. \\

\noindent\textbf{Funding.} SI and BDD are supported by the Ministry of Science, Technological Development and Innovations, Republic of Serbia, grant no. 451-03-66/2024-03/200029. BDD is supported by the bilateral project between Serbia and France (Generalized inverses on algebraic structures and applications), grant no. 337-00-93/2023-05/13. \\

\noindent{\bf{Acknowledgements.} } GB acknowledges that this work has been done within the activities of Gruppo UMI Teoria dell’Approssimazione e Applicazioni and of GNAMPA of the INdAM.

\end{document}